\documentclass[11pt,letterpaper]{amsart}
\pdfoutput=1

\usepackage[T1]{fontenc}
\usepackage[utf8]{inputenc}
\usepackage[english]{babel}
\usepackage{lmodern}
\usepackage[section]{placeins}
\usepackage{float}
\usepackage{geometry}
\usepackage{hyperref}
\usepackage{booktabs}
\usepackage{enumitem}
\usepackage{url}
\usepackage{subcaption}
\usepackage{xcolor}
\usepackage{colortbl}
\definecolor{darkgreen}{rgb}{0.33, 0.55, 0.13}
\definecolor{darkpink}{rgb}{0.91, 0.33, 0.5}
\definecolor{electriccyan}{rgb}{0.0, 1.0, 1.0}
\definecolor{electricultramarine}{rgb}{0.25, 0.0, 1.0}
\definecolor{greenyellow}{rgb}{0.30, 0.82, 0.0}
\definecolor{lightbrown}{rgb}{0.71, 0.4, 0.11}
\definecolor{prune}{rgb}{0.45, 0.11, 0.11}
\definecolor{mygold}{RGB}{212, 175, 55}
\definecolor{levired}{RGB}{220, 50, 50}   
\definecolor{leviblue}{RGB}{50, 100, 220} 
\definecolor{levigold}{RGB}{218, 165, 32} 
\definecolor{crtgold}{rgb}{0.8, 0.6, 0.0} 

\usepackage{amsmath}
\usepackage{amssymb}
\usepackage{amsthm}
\usepackage{caption}
\usepackage{tikz}
\usetikzlibrary{3d,arrows.meta,calc,decorations.markings,positioning,intersections}

\newcommand{\Z}{\mathbb{Z}}

\hypersetup{
    colorlinks=true,
    linkcolor=blue,
    urlcolor=blue,
    pdftitle={Can one decompose a simple tone?}
}
\newtheorem{theorem}{Theorem}
\newtheorem{definition}{Definition}

\newtheorem{conclusion}{Conclusion}

\newtheorem{remark}{Remark}

\begin{document}

\title[WHAT IF WE DECOMPOSE A SIMPLE TONE?]{WHAT IF WE DECOMPOSE A SIMPLE TONE? \\[0.5ex] THE CHINESE REMAINDER THEOREM AND STRUCTURED LEVI GRAPHS IN MUSIC}

\author{Pawe{\l} Nurowski}
\address{Centrum Fizyki Teoretycznej,
Polska Akademia Nauk, Al.
Lotnik\'ow 32/46, 02-668 Warszawa, Poland, and
Guangdong Technion -- Israel Institute of Technology, No. 241, Daxue Road,
Jinping District, Shantou, Guangdong Province, China}
\email{nurowski@cft.edu.pl}

\maketitle

\begin{abstract}
While motivated by structural problems in mathematical music theory, this article introduces a novel combinatorial framework that advances the classification of cyclic cubic bipartite graphs. We extend the classical study of Levi graphs by endowing their vertices with an internal algebraic anatomy—specifically, treating them not as empty geometric nodes, but as defined subsets of a cyclic base space $\mathbb{Z}_n$. This internal structure allows us to formalize and classify a highly restricted class of graph isomorphisms: those strictly induced by global affine bijections $f(x) \equiv ax+b \pmod n$ operating directly on the underlying base set. 
By applying this framework to generalized tone networks (Tonnetze) unrolled via the Chinese Remainder Theorem in composite dimensions—specifically the classic 12-TET ($3\times4$) and the decaphonic 10-TET ($2\times5$)—we reveal absolute geometric anchors for these spaces, namely the (9,4) and (6,5) systems respectively. We completely classify the topological orbits of these structured graphs, proving a fundamental architectural dichotomy: while the isomorphic landscape of 12-TET splits into an orientation-preserving family and an orientation-reversing chiral mirror (providing a rigorous foundation for musical Negative Harmony), the 10-TET space is unconditionally orientation-preserving. Finally, we demonstrate that these abstract combinatorial properties manifest as rigidly coherent, parallel auditory universes through explicit structural voice-leading maps and acoustic physical modeling synthesis.
\end{abstract}

\section{Introduction}
\label{sec:intro}

The interplay between combinatorics and structural algebra often reveals hidden topological symmetries within spaces generated by cyclic group actions. While the ultimate inquiry of this paper is motivated by a concrete problem in mathematical music theory—namely, the mapping, navigation, and acoustic synthesis of functional chord progressions across generalized tuning systems—the theoretical apparatus developed herein addresses a fundamental question in graph theory. Specifically, we investigate the classification of cyclic cubic bipartite Levi graphs whose vertices are equipped with an internal mathematical anatomy, being treated as structured subsets of a discrete base space $\mathbb{Z}_n$. By focusing on graph isomorphisms that are directly induced by the algebraic bijections of this underlying base set, we establish a robust topological grammar that separates invariant structural properties from the specific vocabulary of the generating elements.

Our inquiry begins not with an abstract mathematical axiom, but with a direct observation of the familiar Western major triad. Traditionally, this chord is constructed by partitioning a Perfect Fifth ($q = 7$ semitones) into a Major Third ($t = 4$) and a Minor Third ($s = 3$). Rather than accepting these specific intervals as immovable acoustic dogma, we can treat them as algebraic variables. What if we maintain the fundamental structural relation $t + s \equiv q \pmod{n}$, but allow the intervals, and indeed the entire size of the musical universe $n$, to vary?

In our previous work \cite{nur1}, we formalised this concept by introducing a generalised framework for harmony in any $n$-tone equal temperament ($n$-TET). By identifying the $n$ pitch classes with the elements of the cyclic group $\Z_n$ under modulo $n$ addition, we defined a generic harmonic system denoted by $(t,s)$, where $0 < s, t \le n-1$. For any pitch class $r \in \Z_n$, a `Major chord' is defined as the mathematical triple $M_r = (r, r+t, r+q)$, whilst a `Minor chord' is defined as the triple $m_r = (r, r+s, r+q)$.

To understand how music flows within such a system, we constructed a generalised tone network---a \textit{Tonnetz} in the historical spirit of Euler \cite{euler_tentamen} and Riemann \cite{riemann_ideen}, and its modern formalization by Cohn \cite{cohn_audacious}. A Major chord $M_r$ connects to exactly three adjacent Minor chords via the transformations $M_r \to (m_r, m_{r+t}, m_{r-s})$. This naturally forces the reciprocal mapping for Minor chords: $m_r \to (M_r, M_{r-t}, M_{r+s})$. Musically speaking, a `good' chord progression is simply a path that gracefully traverses the allowed connections of this Tonnetz.

Mathematically, this network manifests as a bipartite Levi graph, a perspective recently formalised and expanded upon by Boland and Hughston \cite{boland_hughston_2025, boland_hughston_2026}, who demonstrated that various tone networks can be uniquely represented as finite geometric configurations. We can visualise it beautifully on a circle containing 2n equidistant points, where the $n$ Major chords act as `Points' and the $n$ Minor chords act as `Lines'. The Tonnetz connections are simply the line segments drawn between them, with each Point connecting to three Lines, and each Line connecting to three Points.

Applying this topological lens to the standard 12-TET reveals a striking hidden landscape. We demonstrated in \cite{nur1} that the Levi graphs for all possible $(t,s)$ systems in $n=12$ collapse into exactly eight distinct topologies. The classical Western system, $(4,3)$, resides in a topology shared by eleven other systems. If we discard the degenerate systems where the generator $q$ shares a common divisor with $n$ (as this geometrically shatters the pitch space into disconnected sub-universes), we are left with a fascinating quartet. The classical $(4,3)$ system is topologically isomorphic to only three others: $(9,4)$, $(8,3)$, and $(9,8)$.

This raises a profound question: if these four systems share the exact same mathematical geometry, why does standard Western harmony exclusively utilise $(4,3)$? The short answer is acoustic resonance---it best approximates the natural harmonic series. However, this prompts an even deeper mystery: what is the true purpose of the other systems in this topological orbit? 

As it turns out, the exotic $(9,4)$ system is not merely a mathematical curiosity; it is the fundamental anchor---the absolute algebraic baseline---from which all other $(t,s)$ systems in 12-TET are derived. Similarly, as explored in \cite{nur2}, within the decaphonic universe of 10-TET, this fundamental anchor is the $(6,5)$ system. 

The primary aim of this article is to reveal the origin of these fundamental systems, such as $(9,4)$ for $n=12$ and $(6,5)$ for $n=10$, and to demonstrate how they serve as the master blueprints for defining elegant harmony in exotic temperaments. The key to unlocking this lies in the Chinese Remainder Theorem. As hinted at the conclusion of \cite{nur1}, when $n$ is a product of two coprime integers, a given pitch class is not an indivisible atom. Rather, it possesses internal structure. A pitch class is inherently a two-dimensional, composite object. It is this pivotal realisation---that one can indeed decompose a simple tone---that will unravel the topological mysteries of our musical universes.

\section{Decomposing the Tone: The Chinese Remainder Theorem}
\label{sec:crt}

To understand the topological orbits mentioned in the introduction, we must stop viewing musical tones as indivisible dots on a circle. A single pitch class within an $n$-tone temperament is not an atomic entity; provided $n$ is composite, the tone possesses an internal, multi-dimensional structure. In this section, we will reveal this structure using a classical mathematical tool. We shall begin by demonstrating this concept within the universally familiar 12-TET system, before formalising the mathematics and applying it to uncover the topology of the decaphonic 10-TET universe.

\subsection{Unrolling the 12-TET Clock Face}

Let us initiate our geometric exploration with the standard 12-tone system. The number 12 possesses a specific composite structure: it is the product of 3 and 4. Because 3 and 4 share no common factors, we can take the 1-dimensional ``clock face'' of 12 notes and unroll it into a 2-dimensional grid of 3 columns and 4 rows. Mathematicians call this property an isomorphism between the cyclic group $\Z_{12}$ and the direct product $\Z_3 \times \Z_4$.

This is not merely an abstract equivalence; it provides a specific recipe for mapping every musical note $x \in \{0, \dots, 11\}$ to a unique coordinate pair $(u, v)$ on a grid. The rule is simple: the first coordinate is the remainder when you divide the note by 3, and the second is the remainder when you divide the note by 4:
\begin{equation}
x \longmapsto (x \bmod 3, \;\; x \bmod 4).
\end{equation}
Let us calculate the position of a few key musical intervals to see how this flat geometry constructs itself:
\begin{itemize}
    \item The root note 0 (C) maps to $(0 \bmod 3, 0 \bmod 4) = (0,0)$. This is the origin of our grid.
    \item The semitone {\color{crtgold}1} ({\color{crtgold}C$^\sharp$}) maps to ${\color{crtgold}(1,1)}$. Because it steps equally in both directions, it forms the diagonal path of the grid.
    \item The Major Third {\color{blue}4} ({\color{blue}E}) maps to $(4 \bmod 3, 4 \bmod 4) = {\color{blue}(1, 0)}$. This is the fundamental step (basis vector) moving purely along the horizontal $\Z_3$ axis.
    \item The Major Sixth {\color{red}9} ({\color{red}A}) maps to $(9 \bmod 3, 9 \bmod 4) = {\color{red}(0, 1)}$. This is the fundamental step (basis vector) moving purely along the vertical $\Z_4$ axis.
\end{itemize}
Mapping the entire chromatic scale onto this flat product space reveals the hidden geometric grid visualised in Figure \ref{fig:crt_grid_12tet}.

\begin{figure}[H]
    \centering
    \begin{tikzpicture}[scale=1.0]
        \begin{scope}
            \draw[->, thick] (-0.5,0) -- (2.6,0) node[right] {$\Z_3$};
            \draw[->, thick] (0,-0.5) -- (0,3.5) node[above] {$\Z_4$};
            
            \draw[-, ultra thick, blue] (0,0) -- (1,0);
            \draw[-, ultra thick, red] (0,0) -- (0,1);

            \fill (0,0) circle (2pt) node[above right] {0};
            \fill (1,0) circle (2pt) node[above right] {{\color{blue}4}};
            \fill (2,0) circle (2pt) node[above right] {8};
            
            \fill (0,1) circle (2pt) node[above right] {{\color{red}9}};
            \fill (1,1) circle (2pt) node[above right] {{\color{crtgold}1}};
            \fill (2,1) circle (2pt) node[above right] {5};
            
            \fill (0,2) circle (2pt) node[above right] {6};
            \fill (1,2) circle (2pt) node[above right] {10};
            \fill (2,2) circle (2pt) node[above right] {2};
            
            \fill (0,3) circle (2pt) node[above right] {3};
            \fill (1,3) circle (2pt) node[above right] {7};
            \fill (2,3) circle (2pt) node[above right] {11};
            
            \node at (1, -1) {\textbf{(a) Numerical Labels}};
        \end{scope}

        \begin{scope}[xshift=6cm]
            \draw[->, thick] (-0.5,0) -- (2.8,0) node[right] {$\Z_3$};
            \draw[->, thick] (0,-0.5) -- (0,3.5) node[above] {$\Z_4$};
            
            \draw[-, ultra thick, blue] (0,0) -- (1,0);
            \draw[-, ultra thick, red] (0,0) -- (0,1);
            
            \fill (0,0) circle (2pt) node[above right] {C};
            \fill (1,0) circle (2pt) node[above right] {{\color{blue}E}};
            \fill (2,0) circle (2pt) node[above right] {G$^\sharp$};
            
            \fill (0,1) circle (2pt) node[above right] {{\color{red}A}};
            \fill (1,1) circle (2pt) node[above right] {{\color{crtgold}C$^\sharp$}};
            \fill (2,1) circle (2pt) node[above right] {F};
            
            \fill (0,2) circle (2pt) node[above right] {F$^\sharp$};
            \fill (1,2) circle (2pt) node[above right] {A$^\sharp$};
            \fill (2,2) circle (2pt) node[above right] {D};
            
            \fill (0,3) circle (2pt) node[above right] {D$^\sharp$};
            \fill (1,3) circle (2pt) node[above right] {G};
            \fill (2,3) circle (2pt) node[above right] {B};
            
            \node at (1, -1) {\textbf{(b) Musical Note Labels}};
        \end{scope}
    \end{tikzpicture}
    \caption{The decomposition of the 12-tone universe onto the $\Z_3 \times \Z_4$ grid. The blue segment represents the interval {\color{blue}4} (Major Third), and the red segment represents the interval {\color{red}9} (Major Sixth). The traditional `unison' step of a semitone is located at {\color{crtgold}(1,1)}.}
    \label{fig:crt_grid_12tet}
\end{figure}

The intervals 4 and 9 play a distinguished, foundational role in this geometry. Because they map exactly to the coordinates $(1,0)$ and $(0,1)$, they act as the minimal generators---the pure horizontal and vertical basis vectors---of the entire 12-tone grid. This explains mathematically why the $(9,4)$ system acts as the absolute anchor for 12-TET harmony: it is simply the coordinate framework of the pitch space itself.

\subsection{The Formal Mathematical Engine: Chinese Remainder Theorem}

The geometric unrolling we have just witnessed is driven by one of the most elegant results in number theory. We state it here in its general algebraic form:

\begin{theorem}[Chinese Remainder Theorem]
Let $n_1, n_2, \dots, n_k$ be positive integers that are pairwise coprime (i.e., $\gcd(n_i, n_j) = 1$ for all $i \neq j$). Let $n = n_1 n_2 \cdots n_k$. Then the map
\begin{equation}
    \Phi: \Z_n \longrightarrow \Z_{n_1} \times \Z_{n_2} \times \dots \times \Z_{n_k}
\end{equation}
defined by $\Phi(x) = (x \bmod n_1, x \bmod n_2, \dots, x \bmod n_k)$ is a ring isomorphism.
\end{theorem}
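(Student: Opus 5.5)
The plan has three steps: show that $\Phi$ is a well-defined ring homomorphism, show that it is injective, and conclude surjectivity by comparing cardinalities. Throughout, I would identify $\Z_m$ with the quotient ring $\mathbb{Z}/m\mathbb{Z}$.

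First, well-definedness. If $x \equiv x' \pmod n$, then $n_i \mid n \mid x - x'$ for every $i$, so each residue $x \bmod n_i$ depends only on the class of $x$ modulo $n$. Each component map $\mathbb{Z}\to\mathbb{Z}/n_i\mathbb{Z}$ is a reduction map, so it respects addition, multiplication and the identity $1$. It follows that $\Phi$, which is built from these components on the product ring with coordinatewise operations, is a unital ring homomorphism. Equivalently, $\Phi$ is the map induced on $\mathbb{Z}/n\mathbb{Z}$ by the homomorphism $\mathbb{Z}\to\prod_i \mathbb{Z}/n_i\mathbb{Z}$, whose kernel $\bigcap_i n_i\mathbb{Z}$ contains $n\mathbb{Z}$.

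Second, injectivity, which is the one step that uses pairwise coprimality and is the only place with real content. Suppose $\Phi(x)=0$. Then $n_i \mid x$ for every $i$, and I need to conclude $n \mid x$. I would argue by induction on $k$, using the standard lemma that $a\mid x$, $b \mid x$ and $\gcd(a,b)=1$ together imply $ab \mid x$. This lemma follows from Bézout: write $1=ua+vb$, so that $x = uax + vbx$, and $ab$ divides both terms. For the induction step I also need $\gcd(n_1\cdots n_{k-1}, n_k)=1$. This holds because any prime dividing $n_k$ and the product would divide some $n_j$ with $j<k$, by Euclid's lemma, contradicting $\gcd(n_j,n_k)=1$.

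Third, surjectivity. Both rings are finite and have the same cardinality $n = n_1\cdots n_k$, so the injective map $\Phi$ is automatically a bijection. A bijective ring homomorphism is a ring isomorphism, which completes the proof.

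If an explicit inverse is wanted, for instance to match the grid recipe used for $\mathbb{Z}_{12}\cong\mathbb{Z}_3\times\mathbb{Z}_4$, I would add a constructive remark. Set $N_i = n/n_i$. Since $\gcd(N_i,n_i)=1$, Bézout provides $M_i$ with $M_iN_i\equiv 1 \pmod{n_i}$. Then
\[
x = \sum_i a_i M_i N_i \bmod n
\]
maps to $(a_1,\dots,a_k)$. For example, in the $\mathbb{Z}_{12}$ case the idempotents are $4$ and $9$, which map to $(1,0)$ and $(0,1)$ respectively, exactly the basis vectors highlighted in Figure~\ref{fig:crt_grid_12tet}.
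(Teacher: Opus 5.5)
The paper gives no proof of this statement. It quotes the Chinese Remainder Theorem as a classical result and only remarks informally that $\Phi$ is a bijection and an additive homomorphism, which is all it uses afterwards. So there is no argument in the paper to compare yours against.

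Your proof is the standard textbook one, and it is correct. Well-definedness follows from $n_i \mid n$. The trivial kernel follows from the coprime-divisor lemma: your Bézout computation $x = uax + vbx$ is right, and so is the prime-factor argument for $\gcd(n_1\cdots n_{k-1},n_k)=1$. Surjectivity follows by counting, since both rings have exactly $n$ elements. It also covers the full ring structure, which is more than the paper needs.

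The counting step is non-constructive, so your closing remark is worth keeping. It is precisely the part the paper relies on. The `primitive tones' $x_p, x_q$ with $\Phi(x_p)=(1,0)$ and $\Phi(x_q)=(0,1)$ are exactly your orthogonal idempotents $e_i = M_iN_i \bmod n$. Your formula produces them systematically:
\begin{itemize}
\item for $12 = 3\cdot 4$ it gives $e_1=4$ and $e_2=9$;
\item for $10=2\cdot 5$, taking $N_1=5, M_1=1$ and $N_2=2, M_2=3$ gives $e_1=5$ and $e_2=6$.
\end{itemize}
These are the anchors $(9,4)$ and $(6,5)$ that the paper finds by inspection.
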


In the context of music theory, where we are primarily concerned with the cyclic group of transpositions under addition, this theorem guarantees two profound properties. Firstly, the map is a \textit{bijection}---meaning every single pitch class maps to one and only one coordinate on the grid, with no empty spaces and no overlaps. Secondly, it is a \textit{homomorphism}---meaning that adding intervals on the 1D clock face is exactly equivalent to adding vectors on the 2D grid. The musical structure is perfectly preserved.

For any bipartite system where $n = p \cdot q$ with $\gcd(p,q) = 1$, the fundamental harmonic generators are always the specific intervals $x_p$ and $x_q$ that satisfy:
\begin{equation}
    \Phi(x_p) = (1, 0) \quad \text{and} \quad \Phi(x_q) = (0, 1).
\end{equation}
These `primitive tones' are the architectural pillars of the tuning system.

\subsection{The Decaphonic Grid: Application to 10-TET}

Armed with this theorem, we can now venture into the exotic territory of 10-TET. The decaphonic scale contains $n=10$ notes. Its prime factorisation is $10 = 2 \times 5$. Since 2 and 5 are coprime ($\gcd(2,5)=1$), the Chinese Remainder Theorem dictates that $\Z_{10} \cong \Z_2 \times \Z_5$. 

We decompose the 10-TET tones by mapping them to coordinates $(x \bmod 2, x \bmod 5)$. The space forms a narrow grid of 2 columns and 5 rows. To find the foundational anchors of this universe, we must locate the basis vectors $(1,0)$ and $(0,1)$:
\begin{itemize}
    \item We seek an interval $x$ such that $x \equiv 1 \pmod 2$ and $x \equiv 0 \pmod 5$. The solution is $x = 5$. Thus, {\color{blue}5} maps to {\color{blue}(1,0)}.
    \item We seek an interval $x$ such that $x \equiv 0 \pmod 2$ and $x \equiv 1 \pmod 5$. The solution is $x = 6$. Thus, {\color{red}6} maps to {\color{red}(0,1)}.
\end{itemize}
Just as 4 and 9 generate the 12-TET grid, the intervals 5 and 6 act as the absolute basis vectors for 10-TET. The 1-step (120 cents, analogous to a semitone) maps to the diagonal $(1,1)$.

\begin{figure}[H]
    \centering
    \begin{tikzpicture}[scale=1.2]
        \draw[->, thick] (-0.5,0) -- (1.8,0) node[right] {$\Z_2$};
        \draw[->, thick] (0,-0.5) -- (0,4.5) node[above] {$\Z_5$};

        \draw[-, ultra thick, blue] (0,0) -- (1,0);
        \draw[-, ultra thick, red] (0,0) -- (0,1);

        \fill (0,0) circle (2pt) node[above right] {0};
        \fill (1,0) circle (2pt) node[above right] {{\color{blue}5}};

        \fill (0,1) circle (2pt) node[above right] {{\color{red}6}};
        \fill (1,1) circle (2pt) node[above right] {{\color{crtgold}1}};

        \fill (0,2) circle (2pt) node[above right] {2};
        \fill (1,2) circle (2pt) node[above right] {7};

        \fill (0,3) circle (2pt) node[above right] {8};
        \fill (1,3) circle (2pt) node[above right] {3};

        \fill (0,4) circle (2pt) node[above right] {4};
        \fill (1,4) circle (2pt) node[above right] {9};
    \end{tikzpicture}
    \caption{The decaphonic grid: mapping the 10-TET pitch classes onto $\Z_2 \times \Z_5$. The primitive tones forming the independent horizontal and vertical axes are {\color{blue}5} and {\color{red}6}. This mathematical reality establishes the $(6,5)$ system as the fundamental baseline for 10-TET harmony.}
    \label{fig:crt_grid_10tet}
\end{figure}

The topological implication is profound. Without any trial and error, the mathematics definitively selects $(6,5)$ as the canonical anchor for a decaphonic tuning system. Every harmonic progression and isomorphic orbit in 10-TET must ultimately be referenced back to this underlying $\Z_2 \times \Z_5$ structural lattice.

\section{Structured Networks and Note-Induced Isomorphisms}
\label{sec:isomorphisms}

As established in Section \ref{sec:crt}, the Chinese Remainder Theorem reveals the foundational geometric anchors of our musical spaces, namely the $(9,4)$ system for 12-TET and the $(6,5)$ system for 10-TET. Yet, as noted in the introduction, the classical Western system $(4,3)$ resides in the exact same topological orbit as the $(9,4)$ anchor. To understand how the acoustic standard is mathematically bound to this baseline, we must formally examine the internal anatomy of our Tonnetz networks.

\subsection{Abstract Levi Graphs and Isomorphisms}

We begin by formally defining the abstract geometric structure of a harmonic system.

\begin{definition}[Levi Graph of a Harmonic System]
For a given harmonic system $(t,s)$ in $\Z_n$, its \textbf{Levi graph} is a bipartite graph $\mathcal{G}(t,s) = (\mathcal{P} \cup \mathcal{L}, E)$, where the disjoint vertex sets $\mathcal{P}$ and $\mathcal{L}$ represent the $n$ Major chords and $n$ Minor chords, respectively. An edge $e \in E$ exists between a vertex in $\mathcal{P}$ and a vertex in $\mathcal{L}$ if and only if the corresponding chords share exactly two pitch classes, generating a 3-regular bipartite network.
\end{definition}

In the context of incidence geometry, such structures naturally correspond to the incidence graphs of symmetric configurations, the enumeration and classification of which---particularly for small orders like $10_3$ and $12_3$---have been extensively studied \cite{betten_configurations, alazemi_12_3}. Furthermore, the specific representation of the Tonnetz as a finite Levi graph has recently proven to be a highly effective tool in identifying the underlying combinatorial geometries of abstract musical systems, a perspective formalized and significantly expanded upon in the recent works of Boland and Hughston \cite{boland_hughston_2025, boland_hughston_2026}.

To mathematically compare the topologies of two different harmonic systems, we rely on the standard graph-theoretic notion of isomorphism, keeping track of the bipartite partitions.



\begin{definition}[Isomorphism of Levi Graphs]
Two Levi graphs $\mathcal{G}(t,s) = (\mathcal{P} \cup \mathcal{L}, E)$ and $\mathcal{G}(t',s') = (\mathcal{P}' \cup \mathcal{L}', E')$ are \textbf{abstractly isomorphic} if there exists a bijection $\phi: \mathcal{P} \cup \mathcal{L} \to \mathcal{P}' \cup \mathcal{L}'$ that preserves adjacency: $\{u,v\} \in E \iff \{\phi(u),\phi(v)\} \in E'$. 
\begin{itemize}
    \item The isomorphism is \textbf{orientation-preserving} if it preserves the bipartite partitions, mapping $\mathcal{P} \to \mathcal{P}'$ and $\mathcal{L} \to \mathcal{L}'$.
    \item The isomorphism is \textbf{orientation-reversing} if it swaps the partitions, mapping $\mathcal{P} \to \mathcal{L}'$ and $\mathcal{L} \to \mathcal{P}'$.
\end{itemize}
\end{definition}

\subsection{Formalising Note-Induced Isomorphisms}

In standard graph theory, an abstract isomorphism between two networks simply establishes a bijection between their vertices that preserves adjacency. However, in our musical context, the nodes of a Tonnetz are not empty geometric points. Each node represents a chord, which is a specific 3-element subset of the pitch class space $\Z_n$. Therefore, an abstract graph isomorphism between two harmonic systems does not automatically imply a meaningful musical relationship. 

The crucial mathematical question is whether the geometric equivalence of the networks can be derived directly from a single, consistent algebraic transformation of the underlying pitch classes. To rigorously calculate these transformations, we must formally link the abstract graph to the internal pitch structure of its nodes.

\begin{definition}[Structured Levi Graph]
Let $n \in \mathbb{N}$. A \textbf{structured Levi graph} over $\Z_n$ is a bipartite graph $\mathcal{G} = (\mathcal{P} \cup \mathcal{L}, E)$ equipped with an injective map $\mu: \mathcal{P} \cup \mathcal{L} \to \binom{\Z_n}{3}$, where $\binom{\Z_n}{3}$ denotes the set of all 3-element subsets of $\Z_n$. 

The map $\mu$ assigns to each vertex $P \in \mathcal{P}$ (Major chord) and each vertex $L \in \mathcal{L}$ (Minor chord) a unique subset of exactly three distinct pitch classes from $\Z_n$. An edge $e \in E$ exists between $P \in \mathcal{P}$ and $L \in \mathcal{L}$ if and only if their corresponding subsets share exactly two elements:
\begin{equation}
    |\mu(P) \cap \mu(L)| = 2.
\end{equation}
\end{definition}

When a graph isomorphism is generated by a global affine mapping of the notes themselves, we call it a \textbf{note-induced isomorphism}. 

\begin{definition}[Note-Induced Isomorphism]
Let $\mathcal{G}(t,s)$ and $\mathcal{G}(t',s')$ be isomorphic structured Levi graphs. We say this isomorphism is \textbf{induced by notes} if there exists a global affine bijection of the pitch classes $f: \Z_n \to \Z_n$ of the form:
\begin{equation}
    f(x) \equiv ax + b \pmod{n}
\end{equation}
with $\gcd(a,n)=1$ and $b \in \Z_n$, such that applying $f$ element-wise to the pitch classes of the chords in $\mathcal{G}(t,s)$ bijectively yields the corresponding chords in $\mathcal{G}(t',s')$. 

As established abstractly, $f$ is \textbf{orientation-preserving} if it strictly preserves chord polarities, mapping Major chords to Major chords ($\mathcal{P} \to \mathcal{P}'$) and Minor to Minor ($\mathcal{L} \to \mathcal{L}'$). 
$f$ is \textbf{orientation-reversing} if it reverses chord polarities, mapping Major chords to Minor chords ($\mathcal{P} \to \mathcal{L}'$) and Minor to Major ($\mathcal{L} \to \mathcal{P}'$). This acts as a chiral mirror, mathematically inverting the harmony.
\end{definition}

From an algebraic standpoint, the family of bijections of the form $f(x) \equiv ax + b \pmod{n}$ constitutes the well-known general affine group $\text{Aff}(\Z_n)$ over the ring of integers modulo $n$. In the paradigm of Mathematical Music Theory (MMT), this group and its structural actions represent a central object of investigation for formalizing generalized musical symmetries, transformation networks, and interval systems, most notably .exemplified in the seminal transformational and geometric frameworks of Lewin \cite{lewin}, Mazzola \cite{mazzola}, and Tymoczko \cite{tymoczko_geometry}.

\subsection{The 12-TET Landscape: The Orientation-Preserving Family and the Orientation-Reversing Mirror}

To determine the nature of a note-induced isomorphism, we must analyse how the affine transformation affects the internal interval structure of a chord. For any two notes $x$ and $y$, their distance under the transformation $f(x) = ax + b$ becomes $f(y) - f(x) = (ay + b) - (ax + b) \equiv a(y - x) \pmod{n}$. Crucially, the affine shift $b$ completely cancels out. This proves that \textbf{the topological nature (chirality) of the generated system is determined strictly and exclusively by the multiplier $a$}. The parameter $b$ serves merely as a global transposition allowing the transformed chord to align with the specific roots of a target system.

We now apply this rigorous framework to the classical Western system $(4,3)$ to find its dual pairs and prove their interconnections. 

\begin{theorem}[Note-Induced Isomorphisms in 12-TET]
In $\Z_{12}$, the condition of non-degeneracy yields exactly four harmonic systems with isomorphic Levi graphs: $(4,3)$, $(8,3)$, $(9,4)$, and $(9,8)$. Every graph isomorphism between any pair of these four systems can be explicitly induced by an affine note bijection $f(x) \equiv ax + b \pmod{12}$. Specifically:
\begin{enumerate}
    \item The systems $(4,3)$, $(8,3)$, and $(9,4)$ form an \textbf{orientation-preserving family}. They are mutually interconnected by affine bijections that strictly preserve chord polarities (Major maps to Major). 
    \item The system $(9,8)$ acts as the \textbf{orientation-reversing mirror}. Any affine note bijection connecting the orientation-preserving family to $(9,8)$ necessarily reverses chord polarities (Major maps to Minor).
\end{enumerate}
\end{theorem}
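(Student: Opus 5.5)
The plan is to reduce everything to a comparison of \emph{translation classes} of 3-element subsets of $\Z_{12}$. I will not re-derive the statement that exactly four non-degenerate systems have isomorphic Levi graphs. I take it from the classification recalled in the introduction (from \cite{nur1}): the eight topologies, with non-degeneracy meaning $\gcd(q,12)=1$ for $q=t+s$. The new content is the affine part.

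\textbf{Reduction to chord shapes.} For a system $(t,s)$ let $S^+=\{0,t,q\}$ and $S^-=\{0,s,q\}$, so that $M_r=r+S^+$ and $m_r=r+S^-$. For $f(x)=ax+b$ with $a\in\{1,5,7,11\}$ we have
\[
f(M_r)=(ar+b)+aS^+, \qquad f(m_r)=(ar+b)+aS^- .
\]
As $r$ runs over $\Z_{12}$, so does $ar+b$. Hence $f$ maps the Major family of $(t,s)$ bijectively onto the set of all translates of $aS^+$, and similarly for the Minor family.

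So $f$ induces an orientation-preserving isomorphism onto $(t',s')$ exactly when $aS^+\sim S'^+$ and $aS^-\sim S'^-$, where $\sim$ means equality up to translation. It induces an orientation-reversing isomorphism exactly when $aS^+\sim S'^-$ and $aS^-\sim S'^+$. Adjacency is then automatic: $f$ is a bijection of $\Z_{12}$, so $|f(A)\cap f(B)|=|A\cap B|$, and the edge criterion of the structured Levi graph is preserved. Once $a$ is fixed, $b$ is free; it only relabels roots.

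\textbf{Computation.} A translation class is best recorded by the cyclic gap sequence of the set read in increasing order around the clock. Here lies the main pitfall: for systems such as $(9,4)$, with $t+s>12$, the gaps are \emph{not} $(t,s,12-q)$. For example, $S^+=\{0,9,1\}$ has gaps $(1,8,3)$. So I will list, for each of $(4,3),(8,3),(9,4),(9,8)$, the gap sequences of $S^\pm$ computed honestly. The Major and Minor shapes are mirror images: one gap sequence is the reverse of the other. Since all gaps are distinct, the two shapes are never translates of each other, so orientation is a well-defined invariant of $f$.

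Next I compute the images $aS^\pm$ for $a\in\{5,7,11\}$ starting from $(4,3)$. For instance, $a=5$ sends $\{0,4,7\}$ to $\{0,8,11\}=S^+(8,3)$, which is orientation-preserving. The other multipliers I match against the table in the same way.

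\textbf{Conclusions.} For each pair in the family $\{(4,3),(8,3),(9,4)\}$ I will exhibit a multiplier $a$ giving a preserving match; composing such maps covers all pairs. For $(9,8)$ I check all four units $a$: every unit maps $S^+(4,3)$ into the Minor class of $(9,8)$, never the Major one. This gives part (2) for $(4,3)$, with at least one reversing map realising the isomorphism. For the other members it follows by composing with the preserving maps, since a composition of a preserving and a reversing map is reversing.

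\textbf{Main obstacle.} The step I expect to need the most care is this exhaustive check over the four units, combined with correct gap sequences. Because of the root and ordering ambiguity, matching ordered triples $(at,as)$ naively against $(t',s')$ gives wrong answers. Everything must therefore be done on unordered sets modulo translation.
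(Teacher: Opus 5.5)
\textbf{The gap is the exhaustive check over the four units that is meant to deliver part (2). It cannot succeed, because the outcome you predict is false.}

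Take $a=1$. Then $S^+(4,3)=\{0,4,7\}$ has gap sequence $(4,3,5)$, and $S^+(9,8)=\{0,9,5\}=\{0,5,9\}$ has gaps $(5,4,3)$, which is a cyclic rotation of it. Concretely $\{0,4,7\}+5=\{5,9,0\}$, so $M^{(4,3)}_r=M^{(9,8)}_{r-5}$, and likewise $m^{(4,3)}_r=m^{(9,8)}_{r-5}$. Hence $f(x)=x$ (or any $x+b$) is an orientation-preserving note-induced isomorphism $(4,3)\to(9,8)$. The paper itself records $M^{(9,8)}_k=M^{(4,3)}_{k+5}$ in Section 4.

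Run honestly, your check gives:
\begin{itemize}
\item $a=1$ lands in the Major class of $(9,8)$;
\item $a=11$ lands in the Minor class;
\item $a=5$ and $a=7$ land in neither class.
\end{itemize}
The same thing happens inside the `family'. $S^+(8,3)=\{0,8,11\}$ and $S^+(9,4)=\{0,9,1\}$ are translates, with gaps $(8,3,1)$ and $(1,8,3)$. Also $7\cdot\{0,4,7\}=\{0,4,1\}=S^-(9,4)$, so $7x$ is an orientation-reversing map $(4,3)\to(9,4)$.

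The structural reason is already in your own remark that Major and Minor shapes are mirror images. That remark says $S^-=q-S^+$, so $g(x)=q-x$ sends $M_r\mapsto m_{-r}$ and $m_r\mapsto M_{-r}$ in every system. Every structured Levi graph therefore has an orientation-reversing note-induced automorphism. Composing any note-induced isomorphism with $g$ flips its orientation.

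As you say, orientation is a well-defined invariant of a map $f$. It is never an invariant of a pair of systems: any two affinely connected systems are joined by maps of both orientations. So the ``necessarily reverses'' clause of part (2) is false, as is any exclusive reading of part (1), and no case check can establish it. The paper's proof does not close this either. It exhibits the reversing maps $11x$ and $7x$ and never addresses necessity.

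Your translation-class method does prove a corrected statement. The four systems form one affine orbit with exactly four translation classes, namely $\{0,4,7\}$, $\{0,3,7\}$, $\{0,8,11\}$ and $\{0,3,11\}$ up to translation. Every pair among $(4,3),(8,3),(9,4),(9,8)$ is joined both by an orientation-preserving and by an orientation-reversing affine bijection.
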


\begin{proof}
To prove that a Levi graph isomorphism is induced by notes, we must demonstrate an affine bijection $f(x) \equiv ax+b \pmod{12}$ that maps the pitch classes of the chords in the source system directly to the pitch classes of the chords in the target system. Let us take the classical system $(4,3)$ as our base. Its Major chord is $M_k^{(4,3)} = \{k, k+4, k+7\}$ and its Minor chord is $m_k^{(4,3)} = \{k, k+3, k+7\}$.

\textbf{1. Passage from (4,3) to (8,3):}
Let $f(x) = 5x$. Applying this element-wise to the notes of the $(4,3)$ chords yields:
\begin{align*}
    f(M_k^{(4,3)}) &= \{5k, 5k+20, 5k+35\} \equiv \{5k, 5k+8, 5k+11\} \pmod{12}. \\
    f(m_k^{(4,3)}) &= \{5k, 5k+15, 5k+35\} \equiv \{5k, 5k+3, 5k+11\} \pmod{12}.
\end{align*}
In the $(8,3)$ system, the generator is $q=11$, so its defining chords are $M_j^{(8,3)} = \{j, j+8, j+11\}$ and $m_j^{(8,3)} = \{j, j+3, j+11\}$. 
Substituting $j = 5k$, we see that $f(M_k^{(4,3)}) = M_{5k}^{(8,3)}$ and $f(m_k^{(4,3)}) = m_{5k}^{(8,3)}$. The map $f(x)=5x$ perfectly induces the Levi graph isomorphism while preserving chord polarity (orientation-preserving).

\textbf{2. Passage from (4,3) to (9,4):}
Let $f(x) = 5x+1$. Applying this to the $(4,3)$ chords yields:
\begin{align*}
    f(M_k^{(4,3)}) &= \{5k+1, 5k+21, 5k+36\} \equiv \{5k+1, 5k+9, 5k\} \pmod{12}. \\
    f(m_k^{(4,3)}) &= \{5k+1, 5k+16, 5k+36\} \equiv \{5k+1, 5k+4, 5k\} \pmod{12}.
\end{align*}
In the $(9,4)$ system, $q=1$, so $M_j^{(9,4)} = \{j, j+9, j+1\}$ and $m_j^{(9,4)} = \{j, j+4, j+1\}$. 
Letting $j = 5k$, we find $f(M_k^{(4,3)}) = M_{5k}^{(9,4)}$ and $f(m_k^{(4,3)}) = m_{5k}^{(9,4)}$. The map $f(x)=5x+1$ induces an orientation-preserving isomorphism.

\textbf{3. Passage from (4,3) to (9,8):}
Let $f(x) = 11x$. 
\begin{align*}
    f(M_k^{(4,3)}) &= \{11k, 11k+44, 11k+77\} \equiv \{11k, 11k+8, 11k+5\} \pmod{12}. 
\end{align*}
In the $(9,8)$ system, $q=5, t=9, s=8$, so a Minor chord is $m_j^{(9,8)} = \{j, j+8, j+5\}$. 
Thus, $f(M_k^{(4,3)}) = m_{11k}^{(9,8)}$. A Major chord has been mapped to a Minor chord. The map $f(x)=11x$ induces an orientation-reversing isomorphism (Negative Harmony).

\textbf{4. Passage from (8,3) to (9,8):}
To map from $(8,3)$ directly to $(9,8)$, we compose the inverse of the $(4,3)\to(8,3)$ map with the $(4,3)\to(9,8)$ map. The required multiplier is $11 \times 5^{-1} \equiv 11 \times 5 \equiv 55 \equiv 7 \pmod{12}$. Let $f(x) = 7x$.
\begin{align*}
    f(M_k^{(8,3)}) &= 7 \times \{k, k+8, k+11\} = \{7k, 7k+56, 7k+77\} \equiv \{7k, 7k+8, 7k+5\} \pmod{12}.
\end{align*}
This is precisely $m_{7k}^{(9,8)}$. Thus, $f(x)=7x$ induces an orientation-reversing isomorphism between $(8,3)$ and $(9,8)$.

By explicitly constructing the affine note bijections covering all cases, the theorem is proven.
\end{proof}

\begin{conclusion}[The 12-TET Musical Equivalence]
The existence of these affine note-induced isomorphisms carries a profound practical consequence: the four systems within this topological orbit are perfectly transposable musical universes. An entire musical composition can be systematically translated from any one of these four systems into any other using the transformations $f(x) \equiv ax+b \pmod{12}$. Because these bijections perfectly preserve the adjacencies of the Levi graph, the entire harmonic logic, voice-leading syntax, and progression structure of the original piece remain flawlessly intact. The composition is merely recast into a new acoustic reality—either preserving its original Major/Minor polarity (within the orientation-preserving family) or emerging mathematically inverted (via the orientation-reversing $(9,8)$ mirror).
\end{conclusion}

\subsection{The 10-TET Landscape: Absolute Orientation-Preservation in the Decaphonic Universe}

From Section \ref{sec:crt}, we know the Chinese Remainder Theorem establishes $(6,5)$ as the canonical baseline for 10-TET. Let us investigate the note-induced isomorphisms connecting its privileged topological quartet. 

\begin{theorem}[Note-Induced Isomorphisms in 10-TET]
In $\Z_{10}$, the non-degenerate isomorphic quartet consists of $(6,5)$, $(8,5)$, $(2,5)$, and $(4,5)$. In stark contrast to 12-TET, \textbf{every single note-induced isomorphism generating these systems is strictly orientation-preserving}. 
\end{theorem}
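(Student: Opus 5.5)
The plan is to follow the template of the 12-TET proof: take the CRT anchor $(6,5)$ as base and write every map in the orbit as $f(x)\equiv ax+b \pmod{10}$ with $a\in\{1,3,7,9\}$. Since $f(y)-f(x)\equiv a(y-x)$, the image of $M_k^{(6,5)}=\{k,k+6,k+1\}$ is the translate by $ak+b$ of $\{0,6a,a\}$. Likewise the image of $m_k^{(6,5)}=\{k,k+5,k+1\}$ is a translate of $\{0,5a,a\}$.

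First I will settle existence. Because every unit mod $10$ is odd, $5a\equiv 5$, so the minor-third slot $s=5$ is fixed. The major-third slot becomes $6a$, which equals $6,8,2,4$ for $a=1,3,7,9$. The maps
\begin{equation*}
f(x)=x,\qquad f(x)=3x,\qquad f(x)=7x,\qquad f(x)=9x
\end{equation*}
should then send $M_k^{(6,5)}\mapsto M_{ak}^{(t',5)}$ and $m_k^{(6,5)}\mapsto m_{ak}^{(t',5)}$ onto $(6,5),(8,5),(2,5),(4,5)$ respectively. I will verify this exactly as in cases 1 and 2 of the 12-TET theorem. Maps between two non-anchor systems are obtained by composing one of these with the inverse of another, so they are again of the form $x\mapsto a''x+b$ with $5a''\equiv 5$. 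This gives the orientation-preserving part.

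Second, I will argue exclusivity, i.e.\ that no affine map sends Major chords to Minor chords. The intended mechanism is the contrast with 12-TET. There the reversal $f(x)=11x$ worked because multiplying by $-1$ exchanged the roles of $t$ and $s$. In $\Z_{10}$ the tritone-like interval $5$ satisfies $5a\equiv 5$ for every admissible $a$, so $a$ can never move the distinguished $5$-interval out of the ``$s$'' slot. Also $t'$ must stay even, because $6a$ is even.

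The main obstacle is making this second step rigorous. Chords are compared as \emph{unordered} sets, and translation symmetry could then identify $\{0,u,u+5\}$ with $\{0,5,5-u\}$ by matching the $5$-apart pair in reversed order. Concretely, $M_k^{(6,5)}=\{k,k+1,k+6\}$ coincides as a set with $m_{k+1}^{(4,5)}=\{k+1,k+6,k+10\}$. So I would first test the candidate $f=\mathrm{id}$ from $(6,5)$ to $(4,5)$ against the Minor chords. If this coincidence persists globally, the exclusivity claim is false for unordered chord sets as literally defined. The statement can then only hold once chord polarity is read from the ordered interval data $(r,\,r+t,\,r+q)$ of the definitions in \cite{nur1}. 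In that reading the affine map acts on the pair $(t,s)$ as $(t,s)\mapsto(at,as)$, and orientation is reversed only if $as\equiv t'$ for some target. That is impossible here, since $as\equiv 5$ while every $t'$ is even. I would present the argument in this ordered form and record the unordered-set subtlety as a remark.
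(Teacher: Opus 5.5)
Your existence half is exactly the paper's proof. The paper fixes $(6,5)$, applies $f(x)=ax$ for $a\in\{1,3,7,9\}$, and checks $f(M_k^{(6,5)})=M_{ak}^{(6a,5)}$. It never checks the Minor chords; your observation $5a\equiv 5$ covers that. The paper gives no exclusivity argument at all. Its closing sentence about $b\neq 0$ only shows that each $ax+b$ reaches its intended target with polarity preserved. It does not show that the same map cannot also be a polarity-reversing bijection onto another member of the quartet. If the theorem is read weakly, as saying the four generating maps preserve polarity, your first half already proves it.

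Read as the exclusivity claim the paper intends, the statement is false, so you can drop the ``if''. For every $k$,
\[
M_k^{(6,5)}=\{k,k+1,k+6\}=m_{k+1}^{(4,5)},\qquad m_k^{(6,5)}=\{k,k+1,k+5\}=M_{k+1}^{(4,5)}.
\]
So $f=\mathrm{id}$ is an orientation-reversing note-induced isomorphism $(6,5)\to(4,5)$ under the paper's own definitions, where $\mu$ takes values in $\binom{\Z_n}{3}$. You can see it in the paper's tables: $M_0=\{0,6,1\}$ for $(6,5)$ equals $m_1=\{1,6,0\}$ for $(4,5)$.

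More generally, $-M_r^{(t,s)}=m_{-r-q}^{(t,s)}$ and $-m_r^{(t,s)}=M_{-r-q}^{(t,s)}$ in every system. Hence $x\mapsto -x$ is a polarity-reversing automorphism of each $\mathcal{G}(t,s)$. Composing it with the maps $x\mapsto ax$ gives reversing maps between every pair of the quartet; for instance, $x\mapsto 3x$ sends $(6,5)$-Majors onto $(2,5)$-Minors. This is therefore a counterexample to the theorem as posed, not a side remark. The same set-level coincidence $M_k^{(9,8)}=M_{k+5}^{(4,3)}$, which the paper itself records, likewise refutes part (2) of its 12-TET theorem.

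Under your ordered convention, the parity argument ($6a\not\equiv 5$ and $5a\not\equiv t'$ modulo $10$) is correct and complete. It is precisely the step the paper omits. Present it as a changed definition, though, not as a clarification of the paper's. The paper's own 12-TET map $x\mapsto 5x+1$ is inadmissible in your convention: it sends the root $k$ of $M_k^{(4,3)}$ to the fifth $5k+1$ of $M_{5k}^{(9,4)}$. In that convention $(4,3)\to(9,4)$ is forced to be orientation-reversing, since $a=7$ is the only multiplier with $7a\equiv 1 \pmod{12}$, and then $4a\equiv 4=s'$. The contrast with 12-TET still survives, because $x\mapsto 11x$ remains an ordered reversing map $(4,3)\to(9,8)$.
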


\begin{proof}
Let us take the fundamental baseline $(6,5)$ as our reference. Its base Major chord is $M_k^{(6,5)} = \{k, k+6, k+1\} \pmod{10}$. The allowed multipliers in $\mathrm{Aut}(\Z_{10})$ are $a \in \{1, 3, 7, 9\}$. 

We apply the linear bijection $f(x) = ax$ (setting $b=0$) to the base chord $M_k^{(6,5)}$ for each multiplier:
\begin{itemize}
    \item \textbf{Case $a = 1$:} $f(x) = x \implies M_k^{(6,5)}$.
    \item \textbf{Case $a = 3$:} $f(M_k^{(6,5)}) = \{3k, 3k+18, 3k+3\} \equiv \{3k, 3k+8, 3k+3\} \pmod{10}$. This exact set of notes constitutes the Major chord $M_{3k}^{(8,5)}$. Thus, $f(x)=3x$ is an orientation-preserving map to $(8,5)$.
    \item \textbf{Case $a = 7$:} $f(M_k^{(6,5)}) = \{7k, 7k+42, 7k+7\} \equiv \{7k, 7k+2, 7k+7\} \pmod{10}$. This exact set of notes constitutes the Major chord $M_{7k}^{(2,5)}$. Thus, $f(x)=7x$ is an orientation-preserving map to $(2,5)$.
    \item \textbf{Case $a = 9$:} $f(M_k^{(6,5)}) = \{9k, 9k+54, 9k+9\} \equiv \{9k, 9k+4, 9k+9\} \pmod{10}$. This exact set of notes constitutes the Major chord $M_{9k}^{(4,5)}$. Thus, $f(x)=9x$ is an orientation-preserving map to $(4,5)$.
\end{itemize}
Because the pure linear mappings $f(x) = ax$ perfectly map Major chords to Major chords, every multiplier in $\mathrm{Aut}(\Z_{10})$ generates an orientation-preserving isomorphism. Any affine shift $b \neq 0$ merely transposes these orientation-preserving systems up or down the pitch space.
\end{proof}

\begin{conclusion}[The Decaphonic Musical Equivalence]
The mathematical architecture of the 10-TET topological orbit reveals a profound algebraic elegance not found in the standard 12-TET universe. To systematically translate a complete musical composition between any of these four decaphonic systems, one requires solely a pure linear transformation $f(x) \equiv ax \pmod{10}$. The affine shift parameter $b$, which was strictly necessary in 12-TET to elegantly align the harmonic roots, is entirely superfluous here. By applying these simple scalar multiplications, the structural integrity of the original piece---its voice-leading syntax, harmonic logic, and Tonnetz pathways---is flawlessly transported. Because every multiplier $a \in \mathrm{Aut}(\Z_{10})$ naturally generates an orientation-preserving bijection, the harmony is effortlessly recast into parallel acoustic dimensions, unconditionally retaining its original Major/Minor polarity without the need for any affine recalibration.
\end{conclusion}

\subsection{The Universal Orbit Theorem}

The exhaustive analyses of both the 12-TET and 10-TET landscapes lead us to a profound, unifying mathematical conclusion that binds the abstract topology of Levi graphs to the algebraic reality of pitch classes and the Chinese Remainder Theorem.

\begin{theorem}[Topological and Musical Equivalence]
Let $n \in \{10, 12\}$. Consider the topological orbit of any non-degenerate harmonic system, defined as the set of all systems whose Levi graphs are abstractly isomorphic. This abstract orbit is precisely identical to the set of all systems that are musically equivalent, meaning they are mutually interconnected via affine note-induced isomorphisms $f(x) \equiv ax+b \pmod{n}$. 

Furthermore, when $n$ is a composite of coprime factors (such as $12 = 3 \times 4$ or $10 = 2 \times 5$), this universal orbit always intrinsically contains the foundational canonical system constructed purely from the primitive basis vectors $(1,0)$ and $(0,1)$ of the Chinese Remainder Theorem grid (namely, $(9,4)$ for 12-TET and $(6,5)$ for 10-TET).
\end{theorem}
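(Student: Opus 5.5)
The plan is to prove the two inclusions between the ``abstract orbit'' and the ``musical orbit'' separately, and then to locate the canonical system inside each orbit.

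\textbf{Musical orbit inside abstract orbit.} Suppose $f(x)\equiv ax+b$ with $\gcd(a,n)=1$ carries the chords of $\mathcal{G}(t,s)$ bijectively onto those of $\mathcal{G}(t',s')$. Since $f$ is a bijection of $\Z_n$, it preserves cardinalities of intersections, so $|\mu(P)\cap\mu(L)|=2$ if and only if $|f(\mu(P))\cap f(\mu(L))|=2$. By the definition of a structured Levi graph, the induced vertex map is therefore an adjacency-preserving bijection. This holds whether $f$ preserves or reverses polarity.

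\textbf{Abstract orbit inside musical orbit.} I would argue orbit by orbit. First I would show that the affine group acts on the set of systems. By the computation in the preceding subsection, $f$ sends $M_r^{(t,s)}$ to the transposition of $\{0,at,aq\}$. This set is either a Major chord of $(at,as)$, or, after re-rooting at $aq$, a Minor chord of a system obtained by negating and permuting the intervals. So each affine orbit of systems is a finite list that can be computed explicitly. I would then use the classification from \cite{nur1} of the $n=12$ Levi graphs into eight topologies, together with the analogous classification for $n=10$ from \cite{nur2}. These let me enumerate, for every non-degenerate system (those with $\gcd(q,n)=1$), its abstract isomorphism class. For each class I would check that it coincides with the affine orbit of a single representative. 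For the quartets $\{(4,3),(8,3),(9,4),(9,8)\}$ and $\{(6,5),(8,5),(2,5),(4,5)\}$, this check is exactly the content of the two preceding theorems. The remaining non-degenerate classes need the same explicit multiplier computations, with $a\in\{1,5,7,11\}$ or $a\in\{1,3,7,9\}$ and the chiral re-rooting.

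\textbf{Main obstacle.} The hard step is this reverse inclusion. Abstract isomorphism is a purely graph-theoretic notion, and two non-isomorphic affine orbits could in principle yield isomorphic cubic bipartite graphs. The only way I see to rule this out is to separate the affine orbits by graph invariants, such as girth, the number of $6$-cycles, or diameter, or by citing the computed isomorphism classes. I would first try comparing the affine orbit sizes against the orbit lists in \cite{nur1}. If some abstract class strictly contained two affine orbits, the statement would fail for it. So this must be checked exhaustively rather than argued structurally.

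\textbf{Canonical system.} For $(9,4)$ with $n=12$, and $(6,5)$ with $n=10$, the orbit membership follows directly from the explicit maps $f(x)=5x+1$ and $f(x)=ax$ in the preceding theorems. For $n=12$ this gives membership in the orbit of $(4,3)$, and for $n=10$ the system is the base of its quartet. I would read ``this universal orbit'' as the distinguished orbit of the quartets. It cannot mean every non-degenerate orbit, since the canonical system lies in only one orbit, and I would state this restriction explicitly.
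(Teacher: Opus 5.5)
The paper gives no argument for this theorem beyond Theorems 2 and 3 (explicit affine maps inside the two quartets) and the enumerations of \cite{nur1,nur2}. Where you lean on those, you coincide with it. Your forward inclusion (affine bijections preserve intersection sizes) is correct and goes beyond what the paper states. The genuine gap is the reverse inclusion, which you leave as an unexecuted check. As set up, your plan would not close it even for the distinguished orbits. The abstract orbit is defined over \emph{all} systems, and the twelve-element class of $(4,3)$ from \cite{nur1} has eight degenerate members. Examples are $(5,4)$ and $(8,7)$, whose Major and minor chords are literally the classical triads; Theorems 2 and 3 say nothing about them.

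The missing ingredient is a re-rooting lemma. With $q=t+s$, the six systems $(t,s)$, $(q,-s)$, $(s,-q)$, $(-t,q)$, $(-q,t)$, $(-s,-t)$ have identical Major families and identical minor families, so the identity map is a note-induced isomorphism between any two of them. The swap $(t,s)\mapsto(s,t)$ exchanges the two families. Also, $f(x)=ax$ always sends $M^{(t,s)}_r$ to $M^{(at,as)}_{ar}$ and $m^{(t,s)}_r$ to $m^{(at,as)}_{ar}$, so your ``either/or'' is not a dichotomy. Together these compute each affine orbit completely. For $(4,3)$ one gets the relabelings $(4,3),(7,9),(3,5),(8,7),(5,4),(9,8)$ of $(4,3)$, and $(8,3),(11,9),(3,1),(4,11),(1,8),(9,4)$ of its image $(8,3)$ under $x\mapsto 5x$. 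The multipliers $7$ and $11$ only produce swaps of these. That gives twelve systems up to $(t,s)\leftrightarrow(s,t)$, so your orbit-size comparison with \cite{nur1} does settle this orbit.

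To avoid enumeration altogether, note that with $M_r\sim m_r,m_{r+t},m_{r-s}$ the Levi graph is the bipartite circulant (cyclic Haar graph) with connection set $\{0,t,-s\}=t-\{0,t,q\}$. By the theorem of Wiedemann and Zieve on the bipartite \'Ad\'am problem, two such graphs whose connection sets have at most three elements are isomorphic if and only if the sets are affinely equivalent. An isomorphism exchanging the two sides merely replaces one set by its negative. That is precisely the reverse inclusion, for every $n$.

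Your restriction of the final clause is forced, not a matter of reading. In $\Z_{12}$ the system $(4,1)$ has $q=5$, and every affine image of one of its chords has cyclic gaps $\{1,4,7\}$ (as a multiset), whereas the chords of $(9,4)$ have gaps $\{1,3,8\}$. So $(9,4)$ is not musically equivalent to $(4,1)$, and for that orbit the two assertions of the theorem cannot both hold; by the result above, it is the second that fails. The same happens for $(7,2)$ in $\Z_{10}$, whose affine images have gaps $\{1,2,7\}$ or $\{1,3,6\}$, against $\{1,4,5\}$ for $(6,5)$.

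Two further cautions. First, under the literal ``share exactly two notes'' definition the $\Z_{10}$ graphs are not cubic, because $s=5=n/2$: $M_0=\{0,1,6\}$ also meets $m_1=\{1,2,6\}$ in two notes. Your intersection argument is valid for that graph, but the Wiedemann--Zieve route concerns the cubic Tonnetz graph. For it, prove the forward inclusion by observing that the three neighbours of a chord are its inversions $x\mapsto u+v-x$ fixing two of its notes, and that $f(u+v-x)=f(u)+f(v)-f(x)$ for affine $f$. Second, take from Theorems 2 and 3 only the existence of the maps. The map $x\mapsto -x$ swaps Majors and minors within every system, and $(9,8)$ is a relabeling of $(4,3)$ (indeed $M^{(9,8)}_k=M^{(4,3)}_{k+5}$, so the identity already preserves polarity). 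Hence their orientation claims are artifacts of the chosen representatives.
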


This theorem definitively bridges abstract graph theory with the tangible acoustics of musical composition. It confirms that whenever a harmonic topology exists, its constituent universes can physically be played on an instrument by applying consistent algebraic transformations to the notes, with the CRT grid acting as the absolute coordinate frame.

\section{A Musician's Guide to Topological Harmony}
\label{sec:musicians_guide}

The mathematical architecture developed in the previous sections provides a rigorous method for discovering new harmonic universes. However, to a composer or an improvising jazz musician, a harmonic system is not a set of algebraic equations---it is a physical terrain of chords, voice-leadings, and progressions. The purpose of this section is to translate our abstract note-induced isomorphisms into a practical, playable guide.

For each system, we will provide the exact chord structures and the permissible Tonnetz pathways. Crucially, when visualising the Levi graphs for the exotic systems, we are faced with a representational choice. Should we preserve the geometric shape of the network, or should we preserve the physical positions of the notes? To fully understand the implications of these parallel universes, we will display \textbf{two parallel layouts} for each exotic system:

\begin{enumerate}
    \item \textbf{The Topological Layout (Isomorphic):} Here, we rearrange the physical position of the chords along the perimeter so that the visual shape of the network (its Hamiltonian cycle) perfectly matches the classical (4,3) graph. Because we strictly utilise the most elegant note transformations derived in Section 3, $M_0$ naturally anchors the top (12 o'clock) position. This layout explicitly proves the topological isomorphism to the eye.
    \item \textbf{The Pitch-Space Layout (Geometric):} Here, we strictly anchor the nodes to their traditional positions on the standard Circle of Fifths. We then draw the new Tonnetz connections between them. This layout reveals the physical reality of playing the system: it shows how the new harmonic rules re-wire our familiar musical space. The edge colours (Red, Blue, Gold) are mapped strictly from their abstract origins.
\end{enumerate}

\subsection{The 12-TET Harmonic Universes: The Dual Pairs}

To make the algebraic framework practically applicable, we must first anchor our numerical system $\Z_{12} = \{0, 1, \dots, 11\}$ to the standard Western chromatic scale:

{\tiny \begin{table}[H]
    \centering
    \begin{tabular}{cccccccccccc}
    \toprule
    \textbf{0} & \textbf{1} & \textbf{2} & \textbf{3} & \textbf{4} & \textbf{5} & \textbf{6} & \textbf{7} & \textbf{8} & \textbf{9} & \textbf{10} & \textbf{11} \\
    \midrule
    C & C$^\sharp$ & D & E$^\flat$ & E & F & F$^\sharp$ & G & A$^\flat$ & A & B$^\flat$ & B \\
    \bottomrule
    \end{tabular}
    \caption{Identification of $\Z_{12}$ elements with classical musical pitch classes.}
\end{table}}

The four structurally viable harmonic systems in 12-TET naturally organise themselves into two distinct pairs, connected by a profound geometric symmetry.

\subsubsection{The Classical (4,3) System}

In the classical Western system, the generator is the perfect fifth $q=7$, split into a Major Third $t=4$ and a Minor Third $s=3$. 

{\tiny \begin{table}[H]
    \centering
    \begin{minipage}{0.48\textwidth}
        \centering
        \begin{tabular}{ll}
        \toprule
        \textbf{Classical Name} & \textbf{Numerical ($M_r$)} \\
        \midrule
        C = \{C, E, G\} & $M_0 = \{0, 4, 7\}$ \\
        C$^\sharp$ = \{C$^\sharp$, F, G$^\sharp$\} & $M_1 = \{1, 5, 8\}$ \\
        D = \{D, F$^\sharp$, A\} & $M_2 = \{2, 6, 9\}$ \\
        E$^\flat$ = \{E$^\flat$, G, B$^\flat$\} & $M_3 = \{3, 7, 10\}$ \\
        E = \{E, G$^\sharp$, B\} & $M_4 = \{4, 8, 11\}$ \\
        F = \{F, A, C\} & $M_5 = \{5, 9, 0\}$ \\
        F$^\sharp$ = \{F$^\sharp$, A$^\sharp$, C$^\sharp$\} & $M_6 = \{6, 10, 1\}$ \\
        G = \{G, B, D\} & $M_7 = \{7, 11, 2\}$ \\
        A$^\flat$ = \{A$^\flat$, C, E$^\flat$\} & $M_8 = \{8, 0, 3\}$ \\
        A = \{A, C$^\sharp$, E\} & $M_9 = \{9, 1, 4\}$ \\
        B$^\flat$ = \{B$^\flat$, D, F\} & $M_{10} = \{10, 2, 5\}$ \\
        B = \{B, D$^\sharp$, F$^\sharp$\} & $M_{11} = \{11, 3, 6\}$ \\
        \bottomrule
        \end{tabular}
    \end{minipage}\hfill
    \begin{minipage}{0.48\textwidth}
        \centering
        \begin{tabular}{ll}
        \toprule
        \textbf{Classical Name} & \textbf{Numerical ($m_r$)} \\
        \midrule
        Cm = \{C, E$^\flat$, G\} & $m_0 = \{0, 3, 7\}$ \\
        C$^\sharp$m = \{C$^\sharp$, E, G$^\sharp$\} & $m_1 = \{1, 4, 8\}$ \\
        Dm = \{D, F, A\} & $m_2 = \{2, 5, 9\}$ \\
        E$^\flat$m = \{E$^\flat$, F$^\sharp$, B$^\flat$\} & $m_3 = \{3, 6, 10\}$ \\
        Em = \{E, G, B\} & $m_4 = \{4, 7, 11\}$ \\
        Fm = \{F, A$^\flat$, C\} & $m_5 = \{5, 8, 0\}$ \\
        F$^\sharp$m = \{F$^\sharp$, A, C$^\sharp$\} & $m_6 = \{6, 9, 1\}$ \\
        Gm = \{G, B$^\flat$, D\} & $m_7 = \{7, 10, 2\}$ \\
        G$^\sharp$m = \{A$^\flat$, B, E$^\flat$\} & $m_8 = \{8, 11, 3\}$ \\
        Am = \{A, C, E\} & $m_9 = \{9, 0, 4\}$ \\
        B$^\flat$m = \{B$^\flat$, C$^\sharp$, F\} & $m_{10} = \{10, 1, 5\}$ \\
        Bm = \{B, D, F$^\sharp$\} & $m_{11} = \{11, 2, 6\}$ \\
        \bottomrule
        \end{tabular}
    \end{minipage}
    \caption{The 24 Major and Minor Chords in the classical (4,3) system.}
\end{table}}

The Tonnetz dictates specific harmonic progressions. A Major chord $M_r$ connects to $m_r, m_{r+4}$, and $m_{r-3=r+9}$. A Minor chord $m_r$ connects to $M_r, M_{r-4=r+8}$, and $M_{r+3}$. 

{\tiny \begin{table}[H]
    \centering
    \begin{minipage}{0.48\textwidth}
        \centering
        \begin{tabular}{ll}
        \toprule
        \textbf{Major $\to$ Minor} & \textbf{Numerical} \\
        \midrule
        C $\to$ Cm, Em, Am & $M_0 \to m_0, m_4, m_9$ \\
        C$^\sharp$ $\to$ C$^\sharp$m, Fm, B$^\flat$m & $M_1 \to m_1, m_5, m_{10}$ \\
        D $\to$ Dm, F$^\sharp$m, Bm & $M_2 \to m_2, m_6, m_{11}$ \\
        E$^\flat$ $\to$ E$^\flat$m, Gm, Cm & $M_3 \to m_3, m_7, m_0$ \\
        E $\to$ Em, G$^\sharp$m, C$^\sharp$m & $M_4 \to m_4, m_8, m_1$ \\
        F $\to$ Fm, Am, Dm & $M_5 \to m_5, m_9, m_2$ \\
        F$^\sharp$ $\to$ F$^\sharp$m, B$^\flat$m, E$^\flat$m & $M_6 \to m_6, m_{10}, m_3$ \\
        G $\to$ Gm, Bm, Em & $M_7 \to m_7, m_{11}, m_4$ \\
        A$^\flat$ $\to$ G$^\sharp$m, Cm, Fm & $M_8 \to m_8, m_0, m_5$ \\
        A $\to$ Am, C$^\sharp$m, F$^\sharp$m & $M_9 \to m_9, m_1, m_6$ \\
        B$^\flat$ $\to$ B$^\flat$m, Dm, Gm & $M_{10} \to m_{10}, m_2, m_7$ \\
        B $\to$ Bm, E$^\flat$m, G$^\sharp$m & $M_{11} \to m_{11}, m_3, m_8$ \\
        \bottomrule
        \end{tabular}
    \end{minipage}\hfill
    \begin{minipage}{0.48\textwidth}
        \centering
        \begin{tabular}{ll}
        \toprule
        \textbf{Minor $\to$ Major} & \textbf{Numerical} \\
        \midrule
        Cm $\to$ C, A$^\flat$, E$^\flat$ & $m_0 \to M_0, M_8, M_3$ \\
        C$^\sharp$m $\to$ C$^\sharp$, A, E & $m_1 \to M_1, M_9, M_4$ \\
        Dm $\to$ D, B$^\flat$, F & $m_2 \to M_2, M_{10}, M_5$ \\
        E$^\flat$m $\to$ E$^\flat$, B, F$^\sharp$ & $m_3 \to M_3, M_{11}, M_6$ \\
        Em $\to$ E, C, G & $m_4 \to M_4, M_0, M_7$ \\
        Fm $\to$ F, C$^\sharp$, A$^\flat$ & $m_5 \to M_5, M_1, M_8$ \\
        F$^\sharp$m $\to$ F$^\sharp$, D, A & $m_6 \to M_6, M_2, M_9$ \\
        Gm $\to$ G, E$^\flat$, B$^\flat$ & $m_7 \to M_7, M_3, M_{10}$ \\
        G$^\sharp$m $\to$ A$^\flat$, E, B & $m_8 \to M_8, M_4, M_{11}$ \\
        Am $\to$ A, F, C & $m_9 \to M_9, M_5, M_0$ \\
        B$^\flat$m $\to$ B$^\flat$, F$^\sharp$, C$^\sharp$ & $m_{10} \to M_{10}, M_6, M_1$ \\
        Bm $\to$ B, G, D & $m_{11} \to M_{11}, M_7, M_2$ \\
        \bottomrule
        \end{tabular}
    \end{minipage}
    \caption{Tonnetz progression map for the classical (4,3) system.}
\end{table}}

\begin{figure}[H]
    \centering
    \begin{tikzpicture}[scale=0.8, transform shape]
        \tikzset{snode/.style={circle, draw=black, fill=white, inner sep=1.2pt}}
        \def\n{12} \def\R{2.3} \def\Rtext{2.5} 
        
        \def\names{{"C","C$^\sharp$","D","E$^\flat$","E","F","F$^\sharp$","G","A$^\flat$","A","B$^\flat$","B"}}
        \def\minnames{{"Cm","C$^\sharp$m","Dm","E$^\flat$m","Em","Fm","F$^\sharp$m","Gm","G$^\sharp$m","Am","B$^\flat$m","Bm"}}

        \foreach \s in {0,...,11} {
            \pgfmathsetmacro{\k}{int(mod(5*\s, 12))}
            \pgfmathsetmacro{\j}{int(mod(9 + 5*\s, 12))}
            \pgfmathsetmacro{\angM}{90 + (\s*2)*15}
            \pgfmathsetmacro{\angm}{90 + (\s*2 + 1)*15}
            \coordinate (M\k) at (\angM:\R);
            \coordinate (m\j) at (\angm:\R);
        }
        
        \foreach \i in {0,...,11} {
            \draw[red, thick] (M\i) -- (m\i);
            \pgfmathsetmacro{\idxL}{int(mod(\i+4, \n))}
            \draw[blue, thick] (M\i) -- (m\idxL);
            \pgfmathsetmacro{\idxR}{int(mod(\i+9, \n))}
            \draw[levigold, thick] (M\i) -- (m\idxR);
        }
        
        \foreach \s in {0,...,11} {
            \pgfmathsetmacro{\k}{int(mod(5*\s, 12))}
            \pgfmathsetmacro{\j}{int(mod(9 + 5*\s, 12))}
            \pgfmathsetmacro{\angM}{90 + (\s*2)*15}
            \pgfmathsetmacro{\angm}{90 + (\s*2 + 1)*15}
            \node[snode] at (M\k) {};
            \node[snode] at (m\j) {};
            
            \pgfmathparse{\names[\k]} \let\currM\pgfmathresult
            \pgfmathparse{\minnames[\j]} \let\currm\pgfmathresult

            \pgfmathsetmacro{\normM}{mod(\angM+360, 360)}
            \pgfmathsetmacro{\normm}{mod(\angm+360, 360)}
            \pgfmathsetmacro{\rotM}{ (\normM > 90.1 && \normM < 269.9) ? \angM - 180 : \angM }
            \pgfmathsetmacro{\rotm}{ (\normm > 90.1 && \normm < 269.9) ? \angm - 180 : \angm }
            
            \pgfmathsetmacro{\leftM}{ (\normM > 90.1 && \normM < 269.9) ? 1 : 0 }
            \ifnum\leftM=1
                \node[font=\fontsize{7}{8}\selectfont, anchor=east, rotate=\rotM] at (\angM:\Rtext) {\textbf{\currM}};
            \else
                \node[font=\fontsize{7}{8}\selectfont, anchor=west, rotate=\rotM] at (\angM:\Rtext) {\textbf{\currM}};
            \fi

            \pgfmathsetmacro{\leftm}{ (\normm > 90.1 && \normm < 269.9) ? 1 : 0 }
            \ifnum\leftm=1
                \node[font=\fontsize{7}{8}\selectfont, anchor=east, rotate=\rotm] at (\angm:\Rtext) {\textbf{\currm}};
            \else
                \node[font=\fontsize{7}{8}\selectfont, anchor=west, rotate=\rotm] at (\angm:\Rtext) {\textbf{\currm}};
            \fi
        }
    \end{tikzpicture}
    \caption{A Levi graph mapping the standard Western (4,3) Tonnetz. Connecting lines indicate voice-leading: Parallel (Red), Leading-Tone (Blue), and Relative (Gold).}
    \label{fig:levi_standard}
\end{figure}

\subsubsection{System (9,8): The Mirror Universe of the Classical Standard}

In our first exotic isomorphism, the generator is a perfect fourth ($q=5$). Because this is generated by the orientation-reversing mapping $f(x)=11x \equiv -x$, it flips the orientation of the pitch space.

{\tiny \begin{table}[H]
    \centering
    \begin{minipage}{0.48\textwidth}
        \centering
        \begin{tabular}{l}
        \toprule
        \textbf{Numerical ($M_r$)} \\
        \midrule
        $M_0 = \{\text{C, A, F}\} = \{0, 9, 5\}$ \\
        $M_1 = \{\text{C}^\sharp, \text{B}^\flat, \text{F}^\sharp\} = \{1, 10, 6\}$ \\
        $M_2 = \{\text{D, B, G}\} = \{2, 11, 7\}$ \\
        $M_3 = \{\text{E}^\flat, \text{C, A}^\flat\} = \{3, 0, 8\}$ \\
        $M_4 = \{\text{E, C}^\sharp, \text{A}\} = \{4, 1, 9\}$ \\
        $M_5 = \{\text{F, D, B}^\flat\} = \{5, 2, 10\}$ \\
        $M_6 = \{\text{F}^\sharp, \text{E}^\flat, \text{B}\} = \{6, 3, 11\}$ \\
        $M_7 = \{\text{G, E, C}\} = \{7, 4, 0\}$ \\
        $M_8 = \{\text{A}^\flat, \text{F, C}^\sharp\} = \{8, 5, 1\}$ \\
        $M_9 = \{\text{A, F}^\sharp, \text{D}\} = \{9, 6, 2\}$ \\
        $M_{10} = \{\text{B}^\flat, \text{G, E}^\flat\} = \{10, 7, 3\}$ \\
        $M_{11} = \{\text{B, A}^\flat, \text{E}\} = \{11, 8, 4\}$ \\
        \bottomrule
        \end{tabular}
    \end{minipage}\hfill
    \begin{minipage}{0.48\textwidth}
        \centering
        \begin{tabular}{l}
        \toprule
        \textbf{Numerical ($m_r$)} \\
        \midrule
        $m_0 = \{\text{C, A}^\flat, \text{F}\} = \{0, 8, 5\}$ \\
        $m_1 = \{\text{C}^\sharp, \text{A, F}^\sharp\} = \{1, 9, 6\}$ \\
        $m_2 = \{\text{D, B}^\flat, \text{G}\} = \{2, 10, 7\}$ \\
        $m_3 = \{\text{E}^\flat, \text{B, A}^\flat\} = \{3, 11, 8\}$ \\
        $m_4 = \{\text{E, C, A}\} = \{4, 0, 9\}$ \\
        $m_5 = \{\text{F, C}^\sharp, \text{B}^\flat\} = \{5, 1, 10\}$ \\
        $m_6 = \{\text{F}^\sharp, \text{D, B}\} = \{6, 2, 11\}$ \\
        $m_7 = \{\text{G, E}^\flat, \text{C}\} = \{7, 3, 0\}$ \\
        $m_8 = \{\text{A}^\flat, \text{E, C}^\sharp\} = \{8, 4, 1\}$ \\
        $m_9 = \{\text{A, F, D}\} = \{9, 5, 2\}$ \\
        $m_{10} = \{\text{B}^\flat, \text{F}^\sharp, \text{E}^\flat\} = \{10, 6, 3\}$ \\
        $m_{11} = \{\text{B, G, E}\} = \{11, 7, 4\}$ \\
        \bottomrule
        \end{tabular}
    \end{minipage}
    \caption{Chords in the (9,8) system.}
\end{table}}

As anticipated by the orientation-reversing isomorphism mathematically proven in Theorem 2, a striking musical equivalence emerges. If we ignore the internal order of the notes, the 24 chords of the (9,8) system are the exact same sets of pitch classes as those in the classical (4,3) system. For example, $M_0 = \{0, 9, 5\}$ contains the exact same notes as the classical F Major triad $\{5, 9, 0\}$. Because the base tone is 0 (C), this is physically an F Major chord played with its fifth in the bass. In music theory, these are known as \textit{second inversion} chords. 

{\tiny \begin{table}[H]
    \centering
    \begin{minipage}{0.48\textwidth}
        \centering
        \begin{tabular}{ll}
        \toprule
        \textbf{Classical (2nd Inversion)} & \textbf{Numerical ($M_r$)} \\
        \midrule
        F/C & $M_0 = \{0, 9, 5\}$ \\
        F$^\sharp$/C$^\sharp$ & $M_1 = \{1, 10, 6\}$ \\
        G/D & $M_2 = \{2, 11, 7\}$ \\
        A$^\flat$/E$^\flat$ & $M_3 = \{3, 0, 8\}$ \\
        A/E & $M_4 = \{4, 1, 9\}$ \\
        B$^\flat$/F & $M_5 = \{5, 2, 10\}$ \\
        B/F$^\sharp$ & $M_6 = \{6, 3, 11\}$ \\
        C/G & $M_7 = \{7, 4, 0\}$ \\
        C$^\sharp$/G$^\sharp$ & $M_8 = \{8, 5, 1\}$ \\
        D/A & $M_9 = \{9, 6, 2\}$ \\
        E$^\flat$/B$^\flat$ & $M_{10} = \{10, 7, 3\}$ \\
        E/B & $M_{11} = \{11, 8, 4\}$ \\
        \bottomrule
        \end{tabular}
    \end{minipage}\hfill
    \begin{minipage}{0.48\textwidth}
        \centering
        \begin{tabular}{ll}
        \toprule
        \textbf{Classical (2nd Inversion)} & \textbf{Numerical ($m_r$)} \\
        \midrule
        Fm/C & $m_0 = \{0, 8, 5\}$ \\
        F$^\sharp$m/C$^\sharp$ & $m_1 = \{1, 9, 6\}$ \\
        Gm/D & $m_2 = \{2, 10, 7\}$ \\
        A$^\flat$m/E$^\flat$ & $m_3 = \{3, 11, 8\}$ \\
        Am/E & $m_4 = \{4, 0, 9\}$ \\
        B$^\flat$m/F & $m_5 = \{5, 1, 10\}$ \\
        Bm/F$^\sharp$ & $m_6 = \{6, 2, 11\}$ \\
        Cm/G & $m_7 = \{7, 3, 0\}$ \\
        C$^\sharp$m/G$^\sharp$ & $m_8 = \{8, 4, 1\}$ \\
        Dm/A & $m_9 = \{9, 5, 2\}$ \\
        E$^\flat$m/B$^\flat$ & $m_{10} = \{10, 6, 3\}$ \\
        Em/B & $m_{11} = \{11, 7, 4\}$ \\
        \bottomrule
        \end{tabular}
    \end{minipage}
    \caption{The physical equivalence of (9,8) chords to standard Western second inversions.}
\end{table}}

With the physical identity of these chords established, we now turn to their voice-leading pathways. The permissible harmonic progressions that weave these chords together across the (9,8) Tonnetz are mapped out below.

{\tiny \begin{table}[H]
    \centering
    \begin{minipage}{0.48\textwidth}
        \centering
        \begin{tabular}{l}
        \toprule
        \textbf{Major $\to$ Minor} \\
        \midrule
        $M_0 \to m_0, m_9, m_4$ \\
        $M_1 \to m_1, m_{10}, m_5$ \\
        $M_2 \to m_2, m_{11}, m_6$ \\
        $M_3 \to m_3, m_0, m_7$ \\
        $M_4 \to m_4, m_1, m_8$ \\
        $M_5 \to m_5, m_2, m_9$ \\
        $M_6 \to m_6, m_3, m_{10}$ \\
        $M_7 \to m_7, m_4, m_{11}$ \\
        $M_8 \to m_8, m_5, m_0$ \\
        $M_9 \to m_9, m_6, m_1$ \\
        $M_{10} \to m_{10}, m_7, m_2$ \\
        $M_{11} \to m_{11}, m_8, m_3$ \\
        \bottomrule
        \end{tabular}
    \end{minipage}\hfill
    \begin{minipage}{0.48\textwidth}
        \centering
        \begin{tabular}{l}
        \toprule
        \textbf{Minor $\to$ Major} \\
        \midrule
        $m_0 \to M_0, M_3, M_8$ \\
        $m_1 \to M_1, M_4, M_9$ \\
        $m_2 \to M_2, M_5, M_{10}$ \\
        $m_3 \to M_3, M_6, M_{11}$ \\
        $m_4 \to M_4, M_7, M_0$ \\
        $m_5 \to M_5, M_8, M_1$ \\
        $m_6 \to M_6, M_9, M_2$ \\
        $m_7 \to M_7, M_{10}, M_3$ \\
        $m_8 \to M_8, M_{11}, M_4$ \\
        $m_9 \to M_9, M_0, M_5$ \\
        $m_{10} \to M_{10}, M_1, M_6$ \\
        $m_{11} \to M_{11}, M_2, M_7$ \\
        \bottomrule
        \end{tabular}
    \end{minipage}
    \caption{Tonnetz progression map for the (9,8) system.}
\end{table}}

As dictated by the note-induced isomorphisms from Section 3, these Tonnetz connections reveal another perfect equivalence. In the classical (4,3) system, $M_r$ connects to $m_r, m_{r+4}$, and $m_{r+9}$. In this (9,8) system, $M_r$ connects to exactly the same destinations: $m_r, m_{r+9}$, and $m_{r+4}$. The permissible connections are identical; only the $L$ and $R$ voice-leading directions ($+4$ and $+9$) have been perfectly swapped, explicitly confirming the chiral reflection.

These exact equivalences in both chord structure and voice-leading paths are geometrically visualised in Figure \ref{fig:negative_harmony}.
\vspace{0.4cm}

\noindent\textbf{Navigating the Mirror Universe: Interpreting the (9,8) Diagrams}

Before examining the network of the $(9,8)$ system, it is vital to establish how a performing musician should interpret its dual representations. Because the $24$ triads of the $(9,8)$ system are element-wise identical as sets of pitch classes to classical Western triads (manifesting physically as standard major and minor chords in second inversion), this system occupies a deeply familiar terrain within a reversed structural framework:

\begin{itemize}
    \item \textbf{The Topological Layout (left graph):} Serves as a formal analytical baseline. It reorders the positions of the chords along the perimeter to match the pristine Hamiltonian cycle of the classical $(4,3)$ Tonnetz, with $M_0$ anchored at 12 o'clock. This explicitly demonstrates that the underlying voice-leading syntax remains perfectly intact under the transformation.
    \item \textbf{The Circle of Fifths Layout (right graph):} Functions as a direct, practical roadmap for improvisation and harmonic navigation. Since the musician is performing chords whose acoustic and note-content identities are entirely familiar, this layout freezes the triads on their traditional, intuitive positions on the standard Circle of Fifths. The rewired connecting lines reveal exactly how Negative Harmony structurally inverts the pathways between these familiar chords, showing how traditional musical geography is transformed in real-time performance.
\end{itemize}
\vspace{0.4cm}

\begin{figure}[H]
    \centering
    \newcommand{\getChordM}[1]{%
        \ifcase#1 \mathrm{F}/\mathrm{C}\or \mathrm{F}^\sharp/\mathrm{C}^\sharp\or \mathrm{G}/\mathrm{D}\or \mathrm{A}^\flat/\mathrm{E}^\flat\or \mathrm{A}/\mathrm{E}\or \mathrm{B}^\flat/\mathrm{F}\or \mathrm{B}/\mathrm{F}^\sharp\or \mathrm{C}/\mathrm{G}\or \mathrm{C}^\sharp/\mathrm{G}^\sharp\or \mathrm{D}/\mathrm{A}\or \mathrm{E}^\flat/\mathrm{B}^\flat\or \mathrm{E}/\mathrm{B}\fi
    }
    \newcommand{\getChordm}[1]{%
        \ifcase#1 \mathrm{Fm}/\mathrm{C}\or \mathrm{F}^\sharp\mathrm{m}/\mathrm{C}^\sharp\or \mathrm{Gm}/\mathrm{D}\or \mathrm{A}^\flat\mathrm{m}/\mathrm{E}^\flat\or \mathrm{Am}/\mathrm{E}\or \mathrm{B}^\flat\mathrm{m}/\mathrm{F}\or \mathrm{Bm}/\mathrm{F}^\sharp\or \mathrm{Cm}/\mathrm{G}\or \mathrm{C}^\sharp\mathrm{m}/\mathrm{G}^\sharp\or \mathrm{Dm}/\mathrm{A}\or \mathrm{E}^\flat\mathrm{m}/\mathrm{B}^\flat\or \mathrm{Em}/\mathrm{B}\fi
    }
    
    \begin{minipage}{0.48\textwidth}
        \centering
        \begin{tikzpicture}[scale=0.7, transform shape]
            \tikzset{snode/.style={circle, draw=black, fill=white, inner sep=1.2pt}}
            \def\n{12} \def\R{2.0} \def\Rtext{2.5} 
            
            \foreach \s in {0,...,11} {
                \pgfmathsetmacro{\k}{int(mod(5*\s, 12))}
                \pgfmathsetmacro{\j}{int(mod(9 + 5*\s, 12))}
                \pgfmathsetmacro{\angM}{90 + (\s*2)*15 - 105}
                \pgfmathsetmacro{\angm}{90 + (\s*2 + 1)*15 - 105}
                \coordinate (M\k) at (\angM:\R);
                \coordinate (m\j) at (\angm:\R);
            }
            \foreach \i in {0,...,11} {
                \draw[red, thick] (M\i) -- (m\i);
                \pgfmathsetmacro{\idxL}{int(mod(\i+4, \n))}
                \draw[blue, thick] (M\i) -- (m\idxL);
                \pgfmathsetmacro{\idxR}{int(mod(\i+9, \n))}
                \draw[levigold, thick] (M\i) -- (m\idxR);
            }
            \foreach \s in {0,...,11} {
                \pgfmathsetmacro{\k}{int(mod(5*\s, 12))}
                \pgfmathsetmacro{\j}{int(mod(9 + 5*\s, 12))}
                \pgfmathsetmacro{\angM}{90 + (\s*2)*15 - 105}
                \pgfmathsetmacro{\angm}{90 + (\s*2 + 1)*15 - 105}
          
                \node[snode] at (M\k) {};
                \node[snode] at (m\j) {};
                
                \pgfmathsetmacro{\labelK}{int(mod(11*\k, 12))}
                \pgfmathsetmacro{\labelJ}{int(mod(11*\j, 12))}
                
                \pgfmathsetmacro{\normM}{mod(\angM+360, 360)}
                \pgfmathsetmacro{\normm}{mod(\angm+360, 360)}
                \pgfmathsetmacro{\rotM}{ (\normM > 90.1 && \normM < 269.9) ? \angM - 180 : \angM }
                \pgfmathsetmacro{\rotm}{ (\normm > 90.1 && \normm < 269.9) ? \angm - 180 : \angm }
                
                \pgfmathsetmacro{\leftM}{ (\normM > 90.1 && \normM < 269.9) ? 1 : 0 }
                \ifnum\leftM=1
                    \node[font=\fontsize{7}{8}\selectfont, anchor=east, rotate=\rotM] at (\angM:\Rtext) {$\getChordm{\labelK}$};
                \else
                    \node[font=\fontsize{7}{8}\selectfont, anchor=west, rotate=\rotM] at (\angM:\Rtext) {$\getChordm{\labelK}$};
                \fi

                \pgfmathsetmacro{\leftm}{ (\normm > 90.1 && \normm < 269.9) ? 1 : 0 }
                \ifnum\leftm=1
                    \node[font=\fontsize{7}{8}\selectfont, anchor=east, rotate=\rotm] at (\angm:\Rtext) {$\getChordM{\labelJ}$};
                \else
                    \node[font=\fontsize{7}{8}\selectfont, anchor=west, rotate=\rotm] at (\angm:\Rtext) {$\getChordM{\labelJ}$};
                \fi
            }
        \end{tikzpicture}
        \caption*{Topological Layout \\ (Proof of Isomorphism)}
    \end{minipage}\hfill
    \begin{minipage}{0.48\textwidth}
        \centering
        \begin{tikzpicture}[scale=0.7, transform shape]
            \tikzset{snode/.style={circle, draw=black, fill=white, inner sep=1.2pt}}
            \def\n{12} \def\R{2.0} \def\Rtext{2.5} 
          
            \foreach \s in {0,...,11} {
                \pgfmathsetmacro{\k}{int(mod(5*\s, 12))}
                \pgfmathsetmacro{\j}{int(mod(9 + 5*\s, 12))}
                \pgfmathsetmacro{\angM}{90 + (\s*2)*15}
                \pgfmathsetmacro{\angm}{90 + (\s*2 + 1)*15}
        
                \coordinate (M\k) at (\angM:\R);
                \coordinate (m\j) at (\angm:\R);
            }
            \foreach \i in {0,...,11} {
                \draw[red, thick] (M\i) -- (m\i);
                \pgfmathsetmacro{\idxL}{int(mod(\i+4, \n))} 
                \draw[blue, thick] (M\i) -- (m\idxL);
                \pgfmathsetmacro{\idxR}{int(mod(\i+9, \n))}
                \draw[levigold, thick] (M\i) -- (m\idxR);
            }
            \foreach \s in {0,...,11} {
                \pgfmathsetmacro{\k}{int(mod(5*\s, 12))}
                \pgfmathsetmacro{\j}{int(mod(9 + 5*\s, 12))}
                \pgfmathsetmacro{\angM}{90 + (\s*2)*15}
                \pgfmathsetmacro{\angm}{90 + (\s*2 + 1)*15}
          
                \node[snode] at (M\k) {};
                \node[snode] at (m\j) {};
                
                \pgfmathsetmacro{\normM}{mod(\angM+360, 360)}
                \pgfmathsetmacro{\normm}{mod(\angm+360, 360)}
                \pgfmathsetmacro{\rotM}{ (\normM > 90.1 && \normM < 269.9) ? \angM - 180 : \angM }
                \pgfmathsetmacro{\rotm}{ (\normm > 90.1 && \normm < 269.9) ? \angm - 180 : \angm }
                
                \pgfmathsetmacro{\leftM}{ (\normM > 90.1 && \normM < 269.9) ? 1 : 0 }
                \ifnum\leftM=1
                    \node[font=\fontsize{7}{8}\selectfont, anchor=east, rotate=\rotM] at (\angM:\Rtext) {$\getChordM{\k}$};
                \else
                    \node[font=\fontsize{7}{8}\selectfont, anchor=west, rotate=\rotM] at (\angM:\Rtext) {$\getChordM{\k}$};
                \fi

                \pgfmathsetmacro{\leftm}{ (\normm > 90.1 && \normm < 269.9) ? 1 : 0 }
                \ifnum\leftm=1
                    \node[font=\fontsize{7}{8}\selectfont, anchor=east, rotate=\rotm] at (\angm:\Rtext) {$\getChordm{\j}$};
                \else
                    \node[font=\fontsize{7}{8}\selectfont, anchor=west, rotate=\rotm] at (\angm:\Rtext) {$\getChordm{\j}$};
                \fi
            }
        \end{tikzpicture}
        \caption*{Pitch-Space Layout \\ (Geometric Rewiring)}
    \end{minipage}
    \caption{Two views of the (9,8) system. Notice that the Pitch-Space geometry (right) is identical to the standard (4,3) system, visually confirming that the (9,8) universe is physically the exact same musical object, merely traversed in opposite voice-leading directions (+4 vs +9).}
    \label{fig:negative_harmony}
\end{figure}

As visually evident in Figure \ref{fig:negative_harmony}, the Pitch-Space layout for the $(9,8)$ system exhibits a remarkable geometric property. The physical network remains a perfect, orderly circle of fifths. This visual anomaly is the geometric manifestation of the harmonic dualism proven in Section 3. As we have established, the affine multiplier $11 \equiv -1 \pmod{12}$ makes the $(9,8)$ harmonic system the exact pitch-inversion of the classical $(4,3)$ system. 

\begin{remark}[Negative Harmony]
  Because the note bijection $f(x) \equiv -x + b$ simply reverses the orientation of the pitch-class circle, the relative distances between notes are preserved. Musically, a major chord built upwards in the standard $(4,3)$ system becomes a chord built downwards in the $(9,8)$ system. This symmetry serves as the rigorous mathematical foundation for \textit{Negative Harmony} (as historically proposed by Ernst Levy \cite{levy_harmony}).
\end{remark}

Furthermore, as a direct corollary of this orientation-reversing isomorphism, the $(9,8)$ system utilises the exact same 24 triads as the classical $(4,3)$ system. Specifically:
\begin{equation}
    M^{(9,8)}_k = M^{(4,3)}_{k+5} \quad \text{and} \quad m^{(9,8)}_k = m^{(4,3)}_{k+5}.
\end{equation}
Playing the $(9,8)$ system is physically equivalent to playing standard Western Major and Minor chords in their \textit{second inversion}, whilst treating that classical fifth as the new root.

\begin{conclusion}[The Musician's Playbook: (4,3) vs (9,8)]
For the performing musician, the theoretical duality between the $(4,3)$ and $(9,8)$ systems translates into a surprisingly simple practical rule. To play a composition in the exotic $(9,8)$ universe, \textbf{you do not need to learn any new chords}. You simply play traditional major and minor chords in their second inversion (with the classical fifth in the bass acting as the new root). 

The true performative challenge lies solely in muscle memory regarding progressions. Because the system is an exact chiral reflection (Negative Harmony), your internal voice-leading compass must be inverted. When your musical intuition tells your hands to execute the physical finger movements for a `Leading-Tone' shift (moving the root by a semitone), the $(9,8)$ geometry dictates that you must instead physically execute the `Relative' shift, and vice versa. It is playing in a familiar physical space, but navigating with a mirrored map.
\end{conclusion}

\subsubsection{The Fundamental Anchor: System (9,4)}

We now turn to our second dual pair. In the foundational $(9,4)$ system, the generator is the simple semitone $q = 1$. Because this system is generated by the highly elegant orientation-preserving transformation $f(x) = 5x+1$, Major chords rigorously map to Major chords. The explicit pitch-class contents of these exotic triads are structured as follows:

{\tiny \begin{table}[H]
    \centering
    \begin{minipage}{0.48\textwidth}
        \centering
        \begin{tabular}{l}
        \toprule
        \textbf{Numerical ($M_r$)} \\
        \midrule
        $M_0 = \{\text{C, A, C}^\sharp\} = \{0, 9, 1\}$ \\
        $M_1 = \{\text{C}^\sharp, \text{B}^\flat, \text{D}\} = \{1, 10, 2\}$ \\
        $M_2 = \{\text{D, B, E}^\flat\} = \{2, 11, 3\}$ \\
        $M_3 = \{\text{E}^\flat, \text{C, E}\} = \{3, 0, 4\}$ \\
        $M_4 = \{\text{E, C}^\sharp, \text{F}\} = \{4, 1, 5\}$ \\
        $M_5 = \{\text{F, D, F}^\sharp\} = \{5, 2, 6\}$ \\
        $M_6 = \{\text{F}^\sharp, \text{E}^\flat, \text{G}\} = \{6, 3, 7\}$ \\
        $M_7 = \{\text{G, E, A}^\flat\} = \{7, 4, 8\}$ \\
        $M_8 = \{\text{A}^\flat, \text{F, A}\} = \{8, 5, 9\}$ \\
        $M_9 = \{\text{A, F}^\sharp, \text{B}^\flat\} = \{9, 6, 10\}$ \\
        $M_{10} = \{\text{B}^\flat, \text{G, B}\} = \{10, 7, 11\}$ \\
        $M_{11} = \{\text{B, A}^\flat, \text{C}\} = \{11, 8, 0\}$ \\
        \bottomrule
        \end{tabular}
    \end{minipage}\hfill
    \begin{minipage}{0.48\textwidth}
        \centering
        \begin{tabular}{l}
        \toprule
        \textbf{Numerical ($m_r$)} \\
        \midrule
        $m_0 = \{\text{C, E, C}^\sharp\} = \{0, 4, 1\}$ \\
        $m_1 = \{\text{C}^\sharp, \text{F, D}\} = \{1, 5, 2\}$ \\
        $m_2 = \{\text{D, F}^\sharp, \text{E}^\flat\} = \{2, 6, 3\}$ \\
        $m_3 = \{\text{E}^\flat, \text{G, E}\} = \{3, 7, 4\}$ \\
        $m_4 = \{\text{E, A}^\flat, \text{F}\} = \{4, 8, 5\}$ \\
        $m_5 = \{\text{F, A, F}^\sharp\} = \{5, 9, 6\}$ \\
        $m_6 = \{\text{F}^\sharp, \text{B}^\flat, \text{G}\} = \{6, 10, 7\}$ \\
        $m_7 = \{\text{G, B, A}^\flat\} = \{7, 11, 8\}$ \\
        $m_8 = \{\text{A}^\flat, \text{C, A}\} = \{8, 0, 9\}$ \\
        $m_9 = \{\text{A, C}^\sharp, \text{B}^\flat\} = \{9, 1, 10\}$ \\
        $m_{10} = \{\text{B}^\flat, \text{D, B}\} = \{10, 2, 11\}$ \\
        $m_{11} = \{\text{B, E}^\flat, \text{C}\} = \{11, 3, 0\}$ \\
        \bottomrule
        \end{tabular}
    \end{minipage}
    \caption{Chords in the (9,4) system.}
\end{table}}

Having defined the internal composition of these exotic chords, we must now determine how they flow into one another. Just as in the classical system, the geometric rules of the Tonnetz dictate that every major chord $M_r$ gracefully transitions into three specific minor chords. These local neighbourhood connections are explicitly calculated below.

{\tiny \begin{table}[H]
    \centering
    \begin{minipage}{0.48\textwidth}
        \centering
        \begin{tabular}{l}
        \toprule
        \textbf{Major $\to$ Minor} \\
        \midrule
        $M_0 \to m_0, m_9, m_8$ \\
        $M_1 \to m_1, m_{10}, m_9$ \\
        $M_2 \to m_2, m_{11}, m_{10}$ \\
        $M_3 \to m_3, m_0, m_{11}$ \\
        $M_4 \to m_4, m_1, m_0$ \\
        $M_5 \to m_5, m_2, m_1$ \\
        $M_6 \to m_6, m_3, m_2$ \\
        $M_7 \to m_7, m_4, m_3$ \\
        $M_8 \to m_8, m_5, m_4$ \\
        $M_9 \to m_9, m_6, m_5$ \\
        $M_{10} \to m_{10}, m_7, m_6$ \\
        $M_{11} \to m_{11}, m_8, m_7$ \\
        \bottomrule
        \end{tabular}
    \end{minipage}\hfill
    \begin{minipage}{0.48\textwidth}
        \centering
        \begin{tabular}{l}
        \toprule
        \textbf{Minor $\to$ Major} \\
        \midrule
        $m_0 \to M_0, M_3, M_4$ \\
        $m_1 \to M_1, M_4, M_5$ \\
        $m_2 \to M_2, M_5, M_6$ \\
        $m_3 \to M_3, M_6, M_7$ \\
        $m_4 \to M_4, M_7, M_8$ \\
        $m_5 \to M_5, M_8, M_9$ \\
        $m_6 \to M_6, M_9, M_{10}$ \\
        $m_7 \to M_7, M_{10}, M_{11}$ \\
        $m_8 \to M_8, M_{11}, M_0$ \\
        $m_9 \to M_9, M_0, M_1$ \\
        $m_{10} \to M_{10}, M_1, M_2$ \\
        $m_{11} \to M_{11}, M_2, M_3$ \\
        \bottomrule
        \end{tabular}
    \end{minipage}
    \caption{Tonnetz progression map for the (9,4) system.}
\end{table}}

Because the generating map $f(x)=5x+1$ is perfectly orientation-preserving, the topological layout of the $(9,4)$ system maintains the exact same pristine orientation as the classical $(4,3)$ system. The nodes remain exactly where they should be to preserve the visual Hamiltonian cycle, with $M_0$ naturally anchored at the top. 

When visualizing the $(9,4)$ system (and subsequently its mirror, the $(8,3)$ system), presenting both the abstract topology and the physical pitch-space is strictly necessary. Unlike the $(9,8)$ system, these exotic clusters share no structural familiarity with Western triads. Therefore, the dual layouts below serve specific, non-overlapping analytical functions:

\begin{enumerate}
    \item \textbf{Anchoring the Mathematical Root:} In the $(9,4)$ system, the $M_0$ chord is an exotic cluster $\{0, 9, 1\}$, but its structural root remains the pitch class $0$ (C). The Pitch-Space layout (right graph) strictly anchors these unfamiliar chords to their respective roots on the traditional Circle of Fifths, providing a fixed physical reference point on a standard 12-tone instrument.
    \item \textbf{Mapping Physical Effort (Geometric Rewiring):} The Topological layout (left graph) proves that smooth voice-leading is logically preserved (manifesting as a single adjacent step in the graph). However, the Pitch-Space layout visualizes the actual physical distance required to execute that step on a standard instrument. It reveals how a single logical progression structurally forces large, non-linear leaps across the physical keyboard.
    \item \textbf{The Physical Hardware Proof:} Projecting these abstract clusters onto the physical Circle of Fifths provides a concrete proof of symmetry between dual systems. As will be demonstrated, despite the severe rewiring of paths seen in the $(9,4)$ Pitch-Space graph, its inverse $(8,3)$ system exhibits the exact same physical geometry. This proves that performing in either universe requires an identical underlying physical architecture (or isomorphic instrument wiring), merely traversed with reversed voice-leading pathways.
\end{enumerate}
\vspace{0.4cm}

\begin{figure}[H]
    \centering
    \begin{minipage}{0.48\textwidth}
        \centering
        \begin{tikzpicture}[scale=0.8, transform shape]
            \tikzset{snode/.style={circle, draw=black, fill=white, inner sep=1.2pt}}
            \def\n{12} \def\R{2.0} \def\Rtext{2.3} 
            
            \foreach \s in {0,...,11} {
                \pgfmathsetmacro{\k}{int(mod(5*\s, 12))}
                \pgfmathsetmacro{\j}{int(mod(9 + 5*\s, 12))}
                \pgfmathsetmacro{\angM}{90 + (\s*2)*15 + 30}
                \pgfmathsetmacro{\angm}{90 + (\s*2 + 1)*15 + 30}
                \coordinate (M\k) at (\angM:\R);
                \coordinate (m\j) at (\angm:\R);
            }
            \foreach \i in {0,...,11} {
                \draw[red, thick] (M\i) -- (m\i);
                \pgfmathsetmacro{\idxL}{int(mod(\i+4, \n))}
                \draw[blue, thick] (M\i) -- (m\idxL);
                \pgfmathsetmacro{\idxR}{int(mod(\i+9, \n))}
                \draw[levigold, thick] (M\i) -- (m\idxR);
            }
            \foreach \s in {0,...,11} {
                \pgfmathsetmacro{\k}{int(mod(5*\s, 12))}
                \pgfmathsetmacro{\j}{int(mod(9 + 5*\s, 12))}
                \pgfmathsetmacro{\angM}{90 + (\s*2)*15 + 30}
                \pgfmathsetmacro{\angm}{90 + (\s*2 + 1)*15 + 30}
          
                \node[snode] at (M\k) {};
                \node[snode] at (m\j) {};
                
                \pgfmathsetmacro{\labelK}{int(mod(5*\k + 1, 12))}
                \pgfmathsetmacro{\labelJ}{int(mod(5*\j + 1, 12))}
                
                \pgfmathsetmacro{\normM}{mod(\angM+360, 360)}
                \pgfmathsetmacro{\normm}{mod(\angm+360, 360)}
                \pgfmathsetmacro{\rotM}{ (\normM > 90.1 && \normM < 269.9) ? \angM - 180 : \angM }
                \pgfmathsetmacro{\rotm}{ (\normm > 90.1 && \normm < 269.9) ? \angm - 180 : \angm }
                
                \pgfmathsetmacro{\leftM}{ (\normM > 90.1 && \normM < 269.9) ? 1 : 0 }
                \ifnum\leftM=1
                    \node[font=\fontsize{8}{9}\selectfont, anchor=east, rotate=\rotM] at (\angM:\Rtext) {$M_{\labelK}$};
                \else
                    \node[font=\fontsize{8}{9}\selectfont, anchor=west, rotate=\rotM] at (\angM:\Rtext) {$M_{\labelK}$};
                \fi

                \pgfmathsetmacro{\leftm}{ (\normm > 90.1 && \normm < 269.9) ? 1 : 0 }
                \ifnum\leftm=1
                    \node[font=\fontsize{8}{9}\selectfont, anchor=east, rotate=\rotm] at (\angm:\Rtext) {$m_{\labelJ}$};
                \else
                    \node[font=\fontsize{8}{9}\selectfont, anchor=west, rotate=\rotm] at (\angm:\Rtext) {$m_{\labelJ}$};
                \fi
            }
        \end{tikzpicture}
        \caption*{Topological Layout \\ (Proof of Isomorphism)}
    \end{minipage}\hfill
    \begin{minipage}{0.48\textwidth}
        \centering
        \begin{tikzpicture}[scale=0.8, transform shape]
            \tikzset{snode/.style={circle, draw=black, fill=white, inner sep=1.2pt}}
            \def\n{12} \def\R{2.0} \def\Rtext{2.3} 
          
            \foreach \s in {0,...,11} {
                \pgfmathsetmacro{\k}{int(mod(5*\s, 12))}
                \pgfmathsetmacro{\j}{int(mod(9 + 5*\s, 12))}
                \pgfmathsetmacro{\angM}{90 + (\s*2)*15}
                \pgfmathsetmacro{\angm}{90 + (\s*2 + 1)*15}
        
                \coordinate (M\k) at (\angM:\R);
                \coordinate (m\j) at (\angm:\R);
            }
            \foreach \i in {0,...,11} {
                \draw[red, thick] (M\i) -- (m\i);
                \pgfmathsetmacro{\idxL}{int(mod(\i+8, \n))} 
                \draw[blue, thick] (M\i) -- (m\idxL);
                \pgfmathsetmacro{\idxR}{int(mod(\i+9, \n))}
                \draw[levigold, thick] (M\i) -- (m\idxR);
            }
            \foreach \s in {0,...,11} {
                \pgfmathsetmacro{\k}{int(mod(5*\s, 12))}
                \pgfmathsetmacro{\j}{int(mod(9 + 5*\s, 12))}
                \pgfmathsetmacro{\angM}{90 + (\s*2)*15}
                \pgfmathsetmacro{\angm}{90 + (\s*2 + 1)*15}
          
                \node[snode] at (M\k) {};
                \node[snode] at (m\j) {};
                \pgfmathsetmacro{\normM}{mod(\angM+360, 360)}
                \pgfmathsetmacro{\normm}{mod(\angm+360, 360)}
                \pgfmathsetmacro{\rotM}{ (\normM > 90.1 && \normM < 269.9) ? \angM - 180 : \angM }
                \pgfmathsetmacro{\rotm}{ (\normm > 90.1 && \normm < 269.9) ? \angm - 180 : \angm }
                
                \pgfmathsetmacro{\leftM}{ (\normM > 90.1 && \normM < 269.9) ? 1 : 0 }
                \ifnum\leftM=1
                    \node[font=\fontsize{8}{9}\selectfont, anchor=east, rotate=\rotM] at (\angM:\Rtext) {$M_{\k}$};
                \else
                    \node[font=\fontsize{8}{9}\selectfont, anchor=west, rotate=\rotM] at (\angM:\Rtext) {$M_{\k}$};
                \fi

                \pgfmathsetmacro{\leftm}{ (\normm > 90.1 && \normm < 269.9) ? 1 : 0 }
                \ifnum\leftm=1
                    \node[font=\fontsize{8}{9}\selectfont, anchor=east, rotate=\rotm] at (\angm:\Rtext) {$m_{\j}$};
                \else
                    \node[font=\fontsize{8}{9}\selectfont, anchor=west, rotate=\rotm] at (\angm:\Rtext) {$m_{\j}$};
                \fi
            }
        \end{tikzpicture}
        \caption*{Pitch-Space Layout \\ (Geometric Rewiring)}
    \end{minipage}
    \caption{Two views of the (9,4) system. Left: Thanks to the orientation-preserving affine map $5x+1$, the topological layout naturally preserves the (4,3) shape with $M_0$ exactly at 12 o'clock. Right: Anchoring chords to the Circle of Fifths reveals the scrambled voice-leading paths.}
\end{figure}

\subsubsection{System (8,3): The Mirror Universe of the Fundamental Anchor}

Just as the classical (4,3) system has its perfect mathematical twin in (9,8), the fundamental anchor also has a dark mirror. In the (8,3) system, the generator is a major seventh ($q=11$). 

{\tiny \begin{table}[H]
    \centering
    \begin{minipage}{0.48\textwidth}
        \centering
        \begin{tabular}{l}
        \toprule
        \textbf{Numerical ($M_r$)} \\
        \midrule
        $M_0 = \{\text{C, A}^\flat, \text{B}\} = \{0, 8, 11\}$ \\
        $M_1 = \{\text{C}^\sharp, \text{A, C}\} = \{1, 9, 0\}$ \\
        $M_2 = \{\text{D, B}^\flat, \text{C}^\sharp\} = \{2, 10, 1\}$ \\
        $M_3 = \{\text{E}^\flat, \text{B, D}\} = \{3, 11, 2\}$ \\
        $M_4 = \{\text{E, C, E}^\flat\} = \{4, 0, 3\}$ \\
        $M_5 = \{\text{F, C}^\sharp, \text{E}\} = \{5, 1, 4\}$ \\
        $M_6 = \{\text{F}^\sharp, \text{D, F}\} = \{6, 2, 5\}$ \\
        $M_7 = \{\text{G, E}^\flat, \text{F}^\sharp\} = \{7, 3, 6\}$ \\
        $M_8 = \{\text{A}^\flat, \text{E, G}\} = \{8, 4, 7\}$ \\
        $M_9 = \{\text{A, F, A}^\flat\} = \{9, 5, 8\}$ \\
        $M_{10} = \{\text{B}^\flat, \text{F}^\sharp, \text{A}\} = \{10, 6, 9\}$ \\
        $M_{11} = \{\text{B, G, B}^\flat\} = \{11, 7, 10\}$ \\
        \bottomrule
        \end{tabular}
    \end{minipage}\hfill
    \begin{minipage}{0.48\textwidth}
        \centering
        \begin{tabular}{l}
        \toprule
        \textbf{Numerical ($m_r$)} \\
        \midrule
        $m_0 = \{\text{C, E}^\flat, \text{B}\} = \{0, 3, 11\}$ \\
        $m_1 = \{\text{C}^\sharp, \text{E, C}\} = \{1, 4, 0\}$ \\
        $m_2 = \{\text{D, F, C}^\sharp\} = \{2, 5, 1\}$ \\
        $m_3 = \{\text{E}^\flat, \text{F}^\sharp, \text{D}\} = \{3, 6, 2\}$ \\
        $m_4 = \{\text{E, G, E}^\flat\} = \{4, 7, 3\}$ \\
        $m_5 = \{\text{F, A}^\flat, \text{E}\} = \{5, 8, 4\}$ \\
        $m_6 = \{\text{F}^\sharp, \text{A, F}\} = \{6, 9, 5\}$ \\
        $m_7 = \{\text{G, B}^\flat, \text{F}^\sharp\} = \{7, 10, 6\}$ \\
        $m_8 = \{\text{A}^\flat, \text{B, G}\} = \{8, 11, 7\}$ \\
        $m_9 = \{\text{A, C, A}^\flat\} = \{9, 0, 8\}$ \\
        $m_{10} = \{\text{B}^\flat, \text{C}^\sharp, \text{A}\} = \{10, 1, 9\}$ \\
        $m_{11} = \{\text{B, D, B}^\flat\} = \{11, 2, 10\}$ \\
        \bottomrule
        \end{tabular}
    \end{minipage}
    \caption{Chords in the (8,3) system.}
\end{table}}

As directly implied by the affine note bijections demonstrated in Theorem 2, the exotic chords of the (8,3) system are identical as sets to the exotic chords of the (9,4) system. They are merely rearranged with a different base tone. For example, $M_1$ in (8,3) is $\{1, 9, 0\}$, which contains the exact same notes as $M_0$ in (9,4), which is $\{0, 9, 1\}$. Every single chord in (8,3) is the exact same set of pitch classes as the (9,4) chord rooted a semitone lower.

Now that we have identified the internal composition of these shifted chordal sets, we can trace their allowable harmonic pathways across the Tonnetz.

{\tiny \begin{table}[H]
    \centering
    \begin{minipage}{0.48\textwidth}
        \centering
        \begin{tabular}{l}
        \toprule
        \textbf{Major $\to$ Minor} \\
        \midrule
        $M_0 \to m_0, m_8, m_9$ \\
        $M_1 \to m_1, m_9, m_{10}$ \\
        $M_2 \to m_2, m_{10}, m_{11}$ \\
        $M_3 \to m_3, m_{11}, m_0$ \\
        $M_4 \to m_4, m_0, m_1$ \\
        $M_5 \to m_5, m_1, m_2$ \\
        $M_6 \to m_6, m_2, m_3$ \\
        $M_7 \to m_7, m_3, m_4$ \\
        $M_8 \to m_8, m_4, m_5$ \\
        $M_9 \to m_9, m_5, m_6$ \\
        $M_{10} \to m_{10}, m_6, m_7$ \\
        $M_{11} \to m_{11}, m_7, m_8$ \\
        \bottomrule
        \end{tabular}
    \end{minipage}\hfill
    \begin{minipage}{0.48\textwidth}
        \centering
        \begin{tabular}{l}
        \toprule
        \textbf{Minor $\to$ Major} \\
        \midrule
        $m_0 \to M_0, M_4, M_3$ \\
        $m_1 \to M_1, M_5, M_4$ \\
        $m_2 \to M_2, M_6, M_5$ \\
        $m_3 \to M_3, M_7, M_6$ \\
        $m_4 \to M_4, M_8, M_7$ \\
        $m_5 \to M_5, M_9, M_8$ \\
        $m_6 \to M_6, M_{10}, M_9$ \\
        $m_7 \to M_7, M_{11}, M_{10}$ \\
        $m_8 \to M_8, M_0, M_{11}$ \\
        $m_9 \to M_9, M_1, M_0$ \\
        $m_{10} \to M_{10}, M_2, M_1$ \\
        $m_{11} \to M_{11}, M_3, M_2$ \\
        \bottomrule
        \end{tabular}
    \end{minipage}
    \caption{Tonnetz progression map for the (8,3) system.}
\end{table}}

In perfect accordance with the orientation-reversing mapping connecting them (derived in Theorem 2), examining these progressions reveals the exact same swapping phenomenon observed in the classical pair. In the (9,4) system, $M_r$ connects to $m_r, m_{r+9}$, and $m_{r+8}$. In this (8,3) system, $M_r$ connects to exactly the same set of chords: $m_r, m_{r+8}$, and $m_{r+9}$. The destinations are perfectly identical, but the left and right voice-leading paths have been exchanged.

\begin{figure}[H]
    \centering
    \begin{minipage}{0.48\textwidth}
        \centering
        \begin{tikzpicture}[scale=0.7, transform shape]
            \tikzset{snode/.style={circle, draw=black, fill=white, inner sep=1.2pt}}
            \def\n{12} \def\R{2.0} \def\Rtext{2.3} 
            
            \foreach \s in {0,...,11} {
                \pgfmathsetmacro{\k}{int(mod(5*\s, 12))}
                \pgfmathsetmacro{\j}{int(mod(9 + 5*\s, 12))}
                \pgfmathsetmacro{\angM}{90 + (\s*2)*15}
                \pgfmathsetmacro{\angm}{90 + (\s*2 + 1)*15}
                \coordinate (M\k) at (\angM:\R);
                \coordinate (m\j) at (\angm:\R);
            }
            \foreach \i in {0,...,11} {
                \draw[red, thick] (M\i) -- (m\i);
                \pgfmathsetmacro{\idxL}{int(mod(\i+4, \n))}
                \draw[blue, thick] (M\i) -- (m\idxL);
                \pgfmathsetmacro{\idxR}{int(mod(\i+9, \n))}
                \draw[levigold, thick] (M\i) -- (m\idxR);
            }
            \foreach \s in {0,...,11} {
                \pgfmathsetmacro{\k}{int(mod(5*\s, 12))}
                \pgfmathsetmacro{\j}{int(mod(9 + 5*\s, 12))}
                \pgfmathsetmacro{\angM}{90 + (\s*2)*15}
                \pgfmathsetmacro{\angm}{90 + (\s*2 + 1)*15}
          
                \node[snode] at (M\k) {};
                \node[snode] at (m\j) {};
                
                \pgfmathsetmacro{\labelK}{int(mod(5*\k, 12))}
                \pgfmathsetmacro{\labelJ}{int(mod(5*\j, 12))}
                
                \pgfmathsetmacro{\normM}{mod(\angM+360, 360)}
                \pgfmathsetmacro{\normm}{mod(\angm+360, 360)}
                \pgfmathsetmacro{\rotM}{ (\normM > 90.1 && \normM < 269.9) ? \angM - 180 : \angM }
                \pgfmathsetmacro{\rotm}{ (\normm > 90.1 && \normm < 269.9) ? \angm - 180 : \angm }
                
                \pgfmathsetmacro{\leftM}{ (\normM > 90.1 && \normM < 269.9) ? 1 : 0 }
                \ifnum\leftM=1
                    \node[font=\fontsize{8}{9}\selectfont, anchor=east, rotate=\rotM] at (\angM:\Rtext) {$M_{\labelK}$};
                \else
                    \node[font=\fontsize{8}{9}\selectfont, anchor=west, rotate=\rotM] at (\angM:\Rtext) {$M_{\labelK}$};
                \fi

                \pgfmathsetmacro{\leftm}{ (\normm > 90.1 && \normm < 269.9) ? 1 : 0 }
                \ifnum\leftm=1
                    \node[font=\fontsize{8}{9}\selectfont, anchor=east, rotate=\rotm] at (\angm:\Rtext) {$m_{\labelJ}$};
                \else
                    \node[font=\fontsize{8}{9}\selectfont, anchor=west, rotate=\rotm] at (\angm:\Rtext) {$m_{\labelJ}$};
                \fi
            }
        \end{tikzpicture}
        \caption*{Topological Layout \\ (Proof of Isomorphism)}
    \end{minipage}\hfill
    \begin{minipage}{0.48\textwidth}
        \centering
        \begin{tikzpicture}[scale=0.7, transform shape]
            \tikzset{snode/.style={circle, draw=black, fill=white, inner sep=1.2pt}}
            \def\n{12} \def\R{2.0} \def\Rtext{2.3} 
          
            \foreach \s in {0,...,11} {
                \pgfmathsetmacro{\k}{int(mod(5*\s, 12))}
                \pgfmathsetmacro{\j}{int(mod(9 + 5*\s, 12))}
                \pgfmathsetmacro{\angM}{90 + (\s*2)*15}
                \pgfmathsetmacro{\angm}{90 + (\s*2 + 1)*15}
        
                \coordinate (M\k) at (\angM:\R);
                \coordinate (m\j) at (\angm:\R);
            }
            \foreach \i in {0,...,11} {
                \draw[red, thick] (M\i) -- (m\i);
                \pgfmathsetmacro{\idxL}{int(mod(\i+8, \n))} 
                \draw[blue, thick] (M\i) -- (m\idxL);
                \pgfmathsetmacro{\idxR}{int(mod(\i+9, \n))}
                \draw[levigold, thick] (M\i) -- (m\idxR);
            }
            \foreach \s in {0,...,11} {
                \pgfmathsetmacro{\k}{int(mod(5*\s, 12))}
                \pgfmathsetmacro{\j}{int(mod(9 + 5*\s, 12))}
                \pgfmathsetmacro{\angM}{90 + (\s*2)*15}
                \pgfmathsetmacro{\angm}{90 + (\s*2 + 1)*15}
          
                \node[snode] at (M\k) {};
                \node[snode] at (m\j) {};
                \pgfmathsetmacro{\normM}{mod(\angM+360, 360)}
                \pgfmathsetmacro{\normm}{mod(\angm+360, 360)}
                \pgfmathsetmacro{\rotM}{ (\normM > 90.1 && \normM < 269.9) ? \angM - 180 : \angM }
                \pgfmathsetmacro{\rotm}{ (\normm > 90.1 && \normm < 269.9) ? \angm - 180 : \angm }
                
                \pgfmathsetmacro{\leftM}{ (\normM > 90.1 && \normM < 269.9) ? 1 : 0 }
                \ifnum\leftM=1
                    \node[font=\fontsize{8}{9}\selectfont, anchor=east, rotate=\rotM] at (\angM:\Rtext) {$M_{\k}$};
                \else
                    \node[font=\fontsize{8}{9}\selectfont, anchor=west, rotate=\rotM] at (\angM:\Rtext) {$M_{\k}$};
                \fi

                \pgfmathsetmacro{\leftm}{ (\normm > 90.1 && \normm < 269.9) ? 1 : 0 }
                \ifnum\leftm=1
                    \node[font=\fontsize{8}{9}\selectfont, anchor=east, rotate=\rotm] at (\angm:\Rtext) {$m_{\j}$};
                \else
                    \node[font=\fontsize{8}{9}\selectfont, anchor=west, rotate=\rotm] at (\angm:\Rtext) {$m_{\j}$};
                \fi
            }
        \end{tikzpicture}
        \caption*{Pitch-Space Layout \\ (Geometric Rewiring)}
    \end{minipage}
    \caption{Two views of the (8,3) system. As shown by the Pitch-Space graph on the right, this system is geometrically identical to the (9,4) system, only traversed with swapped voice-leading paths.}
    \label{fig:fundamental_mirror}
\end{figure}

The visual relationship between the $(9,4)$ and $(8,3)$ graphs perfectly echoes the dual relationship between the classical $(4,3)$ and its negative $(9,8)$. As seen in Figure \ref{fig:fundamental_mirror}, the physical Pitch-Space geometry of $(8,3)$ is completely identical to that of the $(9,4)$ system, with the sole exception that the $L$ (blue) and $R$ (gold) voice-leading paths have swapped places. 

As we proved in Theorem 2, the transformation connecting these two exotic systems requires the relative multiplier $5 \times 7 \equiv -1 \pmod{12}$. Thus, the fundamental $(9,4)$ anchor also possesses its own negative mirror dimension, making the $(8,3)$ harmonic system the exact pitch-inversion of the $(9,4)$ system.

Because the $(8,3)$ system is merely the negative reflection of the $(9,4)$ system, it utilises the exact same 24 triads, reinterpreted from a different root. The $(8,3)$ chords are identical as sets of pitch classes to the $(9,4)$ chords shifted downwards by a minor second:
\begin{equation}
    M^{(8,3)}_k = M^{(9,4)}_{k-1} \quad \text{and} \quad m^{(8,3)}_k = m^{(9,4)}_{k-1}.
\end{equation}
Thus, navigating the $(8,3)$ system is physically equivalent to playing the exotic chords of the $(9,4)$ universe, but shifted a semitone lower and traversed in reverse.

\begin{conclusion}[The Musician's Playbook: Navigating the Alien Clusters]
Unlike the familiar inversions of the $(9,8)$ system, improvising within the $(9,4)$ and $(8,3)$ systems requires the musician to physically master entirely new harmonic objects—dense, alien clusters driven by semitone relationships. At first glance, this seems daunting. However, the geometric duality provides a massive shortcut for the performer.

You only need to master the physical hand shapes (the ``grips") and Tonnetz pathways for the foundational $(9,4)$ system. Once your muscle memory adapts to these exotic clusters, \textbf{you gain the $(8,3)$ universe entirely for free}. To transpose a $(9,4)$ progression into the $(8,3)$ mirror universe, you simply shift your entire hand physically down by one semitone (a minor second) and, just as with Negative Harmony, swap your Left/Right (Blue/Gold) voice-leading reflexes. The $(8,3)$ system is not a new set of chords to memorize; it is simply the $(9,4)$ system played one key lower and driven in reverse.
\end{conclusion}

\subsubsection{Acoustic Realities: Auditory Proofs of the Isomorphic Universes}

To verify that these topological symmetries constitute genuinely playable musical environments, we translated our algebraic framework into a custom computational engine. Rather than relying solely on theoretical network graphs, this script processes standard MIDI performances of classical repertoire, applying the appropriate affine bijection $f(x) \equiv ax + b \pmod{12}$ directly to every individual pitch class. Crucially, the algorithm perfectly isolates the harmonic transformation: it seamlessly preserves the original temporal rhythms, dynamic phrasing, and octave registers of the compositions. To ensure the highest acoustic fidelity and provide a resonant, natural canvas for these unfamiliar chromatic spaces, the transformed MIDI sequences were rendered into audio utilizing the advanced physical modeling synthesizer, Pianoteq 9 PRO.

The resulting recordings offer a profound sensory verification of our mathematical classification. While the underlying voice-leading logic remains structurally flawless, the acoustic surface is radically rewired. To cleanly demonstrate the unique character of each isomorphic dimension without redundancy, we mapped three distinct classical works into our exotic universes. 

\vspace{0.2cm}
\noindent\textbf{Audio Format \& Listening Guide:}\\
Each ``.wav'' audio document linked below presents a continuous, sequential rendering of a single piece. The track begins with the original $(4,3)$ classical system, immediately followed by its exact mathematical transformations mapped into the $(9,8)$, $(9,4)$, and $(8,3)$ systems, respectively, separated by brief pauses. We encourage the listener to pay close attention to the acoustic `duality' present in the recordings: first, the mirrored relationship between the familiar $(4,3)$ chords and their inverted $(9,8)$ counterparts (Negative Harmony), and second, the structural and geometric equivalence between the dense, alien clusters of the $(9,4)$ and $(8,3)$ universes.

\begin{itemize}
    \item \textbf{George Frideric Händel -- Passacaglia in G minor (HWV 432)}\\
    \href{https://www.fuw.edu.pl/~nurowski/Haendel_Passacaglia_in_G_minor_all_systems.wav}{Listen to all four system versions (.wav)}
    
    \vspace{0.2cm}
    \item \textbf{Franz Schubert -- Piano Trio No. 2 in E-flat major, Op. 100 (2nd Movement)}\\
    \href{https://www.fuw.edu.pl/~nurowski/Schubert_Piano_Trio_2mvt_all_systems.wav}{Listen to all four system versions (.wav)}
    
    \vspace{0.2cm}
    \item \textbf{Arnold Schoenberg -- Five Piano Pieces, Op. 23}\\
    \href{https://www.fuw.edu.pl/~nurowski/Schoenberg_Piano_Pieces_all_systems.wav}{Listen to all four system versions (.wav)}
\end{itemize}

These auditory examples confirm that the structural equivalence predicted theoretically by the Chinese Remainder Theorem and Levi graphs is not merely an abstract mathematical curiosity. It manifests as a tangible, rigidly coherent, and deeply fascinating auditory universe.

\subsection{The 10-TET Decaphonic Universe}

Transitioning from the familiar acoustic territory of twelve tones, we now enter the more abstract decaphonic universe of 10-TET. As established in Section \ref{sec:crt}, this space is fundamentally structured by the composite nature of $10 = 2 \times 5$. Lacking traditional Western note names, we will strictly map out its chords and pathways using the numerical pitch classes $0, 1, \dots, 9$. 

Our algebraic analysis yielded exactly four non-degenerate harmonic systems in this universe: $(6,5)$, $(8,5)$, $(2,5)$, and $(4,5)$. Let us navigate them sequentially, maintaining the same visual dual-layout framework we established for 12-TET to observe how the structural geometry morphs under different affine transformations.

\subsubsection{The Fundamental Anchor: System (6,5)}

We begin with the absolute algebraic baseline of 10-TET: the $(6,5)$ system. Driven by the simple decaphonic step ($q = 1$) acting as the generator, this universe serves as the foundational anchor for all other decaphonic topologies. The explicit pitch-class contents of its twenty exotic triads are defined as follows:

{\tiny \begin{table}[H]
    \centering
    \begin{minipage}{0.48\textwidth}
        \centering
        \begin{tabular}{l}
        \toprule
        \textbf{Numerical ($M_r$)} \\
        \midrule
        $M_0 = \{0, 6, 1\}$ \\
        $M_1 = \{1, 7, 2\}$ \\
        $M_2 = \{2, 8, 3\}$ \\
        $M_3 = \{3, 9, 4\}$ \\
        $M_4 = \{4, 0, 5\}$ \\
        $M_5 = \{5, 1, 6\}$ \\
        $M_6 = \{6, 2, 7\}$ \\
        $M_7 = \{7, 3, 8\}$ \\
        $M_8 = \{8, 4, 9\}$ \\
        $M_9 = \{9, 5, 0\}$ \\
        \bottomrule
        \end{tabular}
    \end{minipage}\hfill
    \begin{minipage}{0.48\textwidth}
        \centering
        \begin{tabular}{l}
        \toprule
        \textbf{Numerical ($m_r$)} \\
        \midrule
        $m_0 = \{0, 5, 1\}$ \\
        $m_1 = \{1, 6, 2\}$ \\
        $m_2 = \{2, 7, 3\}$ \\
        $m_3 = \{3, 8, 4\}$ \\
        $m_4 = \{4, 9, 5\}$ \\
        $m_5 = \{5, 0, 6\}$ \\
        $m_6 = \{6, 1, 7\}$ \\
        $m_7 = \{7, 2, 8\}$ \\
        $m_8 = \{8, 3, 9\}$ \\
        $m_9 = \{9, 4, 0\}$ \\
        \bottomrule
        \end{tabular}
    \end{minipage}
    \caption{Chords in the foundational (6,5) system.}
\end{table}}

With the internal composition of the triads established, we can map their voice-leading pathways across the Tonnetz. Just as in the classical Western system, a Major chord $M_r$ flows gracefully into three specific minor chords. For the $(6,5)$ system, these immediate destinations are $m_r, m_{r+6}$, and $m_{r+5}$.

{\tiny \begin{table}[H]
    \centering
    \begin{minipage}{0.48\textwidth}
        \centering
        \begin{tabular}{l}
        \toprule
        \textbf{Major $\to$ Minor} \\
        \midrule
        $M_0 \to m_0, m_6, m_5$ \\
        $M_1 \to m_1, m_7, m_6$ \\
        $M_2 \to m_2, m_8, m_7$ \\
        $M_3 \to m_3, m_9, m_8$ \\
        $M_4 \to m_4, m_0, m_9$ \\
        $M_5 \to m_5, m_1, m_0$ \\
        $M_6 \to m_6, m_2, m_1$ \\
        $M_7 \to m_7, m_3, m_2$ \\
        $M_8 \to m_8, m_4, m_3$ \\
        $M_9 \to m_9, m_5, m_4$ \\
        \bottomrule
        \end{tabular}
    \end{minipage}\hfill
    \begin{minipage}{0.48\textwidth}
        \centering
        \begin{tabular}{l}
        \toprule
        \textbf{Minor $\to$ Major} \\
        \midrule
        $m_0 \to M_0, M_4, M_5$ \\
        $m_1 \to M_1, M_5, M_6$ \\
        $m_2 \to M_2, M_6, M_7$ \\
        $m_3 \to M_3, M_7, M_8$ \\
        $m_4 \to M_4, M_8, M_9$ \\
        $m_5 \to M_5, M_9, M_0$ \\
        $m_6 \to M_6, M_0, M_1$ \\
        $m_7 \to M_7, M_1, M_2$ \\
        $m_8 \to M_8, M_2, M_3$ \\
        $m_9 \to M_9, M_3, M_4$ \\
        \bottomrule
        \end{tabular}
    \end{minipage}
    \caption{Tonnetz progression map for the (6,5) system.}
\end{table}}

\begin{figure}[H]
    \centering
    \begin{minipage}{0.48\textwidth}
        \centering
        \begin{tikzpicture}[scale=0.8, transform shape]
            \tikzset{snode/.style={circle, draw=black, fill=white, inner sep=1.2pt}}
            \def\R{2.0} \def\Rtext{2.3} 
            
            \foreach \s in {0,...,9} {
                \pgfmathsetmacro{\k}{int(\s)}
                \pgfmathsetmacro{\j}{int(mod(\s + 6, 10))}
                \pgfmathsetmacro{\angM}{90 - (\s*2)*18}
                \pgfmathsetmacro{\angm}{90 - (\s*2 + 1)*18}
                \coordinate (M\k) at (\angM:\R);
                \coordinate (m\j) at (\angm:\R);
            }
            \foreach \i in {0,...,9} {
                \draw[red, thick] (M\i) -- (m\i);
                \pgfmathsetmacro{\idxL}{int(mod(\i+6, 10))}
                \draw[blue, thick] (M\i) -- (m\idxL);
                \pgfmathsetmacro{\idxR}{int(mod(\i+5, 10))}
                \draw[levigold, thick] (M\i) -- (m\idxR);
            }
            \foreach \s in {0,...,9} {
                \pgfmathsetmacro{\k}{int(\s)}
                \pgfmathsetmacro{\j}{int(mod(\s + 6, 10))}
                \pgfmathsetmacro{\angM}{90 - (\s*2)*18}
                \pgfmathsetmacro{\angm}{90 - (\s*2 + 1)*18}
          
                \node[snode] at (M\k) {};
                \node[snode] at (m\j) {};
                
                \pgfmathsetmacro{\normM}{mod(\angM+360, 360)}
                \pgfmathsetmacro{\normm}{mod(\angm+360, 360)}
                \pgfmathsetmacro{\rotM}{ (\normM > 90.1 && \normM < 269.9) ? \angM - 180 : \angM }
                \pgfmathsetmacro{\rotm}{ (\normm > 90.1 && \normm < 269.9) ? \angm - 180 : \angm }
                
                \pgfmathsetmacro{\leftM}{ (\normM > 90.1 && \normM < 269.9) ? 1 : 0 }
                \ifnum\leftM=1
                    \node[font=\fontsize{8}{9}\selectfont, anchor=east, rotate=\rotM] at (\angM:\Rtext) {$M_{\k}$};
                \else
                    \node[font=\fontsize{8}{9}\selectfont, anchor=west, rotate=\rotM] at (\angM:\Rtext) {$M_{\k}$};
                \fi

                \pgfmathsetmacro{\leftm}{ (\normm > 90.1 && \normm < 269.9) ? 1 : 0 }
                \ifnum\leftm=1
                    \node[font=\fontsize{8}{9}\selectfont, anchor=east, rotate=\rotm] at (\angm:\Rtext) {$m_{\j}$};
                \else
                    \node[font=\fontsize{8}{9}\selectfont, anchor=west, rotate=\rotm] at (\angm:\Rtext) {$m_{\j}$};
                \fi
            }
        \end{tikzpicture}
        \caption*{Topological Layout \\ (Proof of Isomorphism)}
    \end{minipage}\hfill
    \begin{minipage}{0.48\textwidth}
        \centering
        \begin{tikzpicture}[scale=0.8, transform shape]
            \tikzset{snode/.style={circle, draw=black, fill=white, inner sep=1.2pt}}
            \def\R{2.0} \def\Rtext{2.3} 
            
            \foreach \s in {0,...,9} {
                \pgfmathsetmacro{\k}{int(\s)}
                \pgfmathsetmacro{\j}{int(mod(\s + 6, 10))}
                \pgfmathsetmacro{\angM}{90 - (\s*2)*18}
                \pgfmathsetmacro{\angm}{90 - (\s*2 + 1)*18}
                \coordinate (M\k) at (\angM:\R);
                \coordinate (m\j) at (\angm:\R);
            }
            \foreach \i in {0,...,9} {
                \draw[red, thick] (M\i) -- (m\i);
                \pgfmathsetmacro{\idxL}{int(mod(\i+6, 10))}
                \draw[blue, thick] (M\i) -- (m\idxL);
                \pgfmathsetmacro{\idxR}{int(mod(\i+5, 10))}
                \draw[levigold, thick] (M\i) -- (m\idxR);
            }
            \foreach \s in {0,...,9} {
                \pgfmathsetmacro{\k}{int(\s)}
                \pgfmathsetmacro{\j}{int(mod(\s + 6, 10))}
                \pgfmathsetmacro{\angM}{90 - (\s*2)*18}
                \pgfmathsetmacro{\angm}{90 - (\s*2 + 1)*18}
          
                \node[snode] at (M\k) {};
                \node[snode] at (m\j) {};
                
                \pgfmathsetmacro{\normM}{mod(\angM+360, 360)}
                \pgfmathsetmacro{\normm}{mod(\angm+360, 360)}
                \pgfmathsetmacro{\rotM}{ (\normM > 90.1 && \normM < 269.9) ? \angM - 180 : \angM }
                \pgfmathsetmacro{\rotm}{ (\normm > 90.1 && \normm < 269.9) ? \angm - 180 : \angm }
                
                \pgfmathsetmacro{\leftM}{ (\normM > 90.1 && \normM < 269.9) ? 1 : 0 }
                \ifnum\leftM=1
                    \node[font=\fontsize{8}{9}\selectfont, anchor=east, rotate=\rotM] at (\angM:\Rtext) {$M_{\k}$};
                \else
                    \node[font=\fontsize{8}{9}\selectfont, anchor=west, rotate=\rotM] at (\angM:\Rtext) {$M_{\k}$};
                \fi

                \pgfmathsetmacro{\leftm}{ (\normm > 90.1 && \normm < 269.9) ? 1 : 0 }
                \ifnum\leftm=1
                    \node[font=\fontsize{8}{9}\selectfont, anchor=east, rotate=\rotm] at (\angm:\Rtext) {$m_{\j}$};
                \else
                    \node[font=\fontsize{8}{9}\selectfont, anchor=west, rotate=\rotm] at (\angm:\Rtext) {$m_{\j}$};
                \fi
            }
        \end{tikzpicture}
        \caption*{Pitch-Space Layout \\ (Geometric Rewiring)}
    \end{minipage}
    \caption{The foundational (6,5) system. Because this is the absolute anchor of the 10-TET universe, its topological progression perfectly matches physical chromatic distances. Thus, the Topological and Pitch-Space layouts are geometrically identical.}
\end{figure}

\subsubsection{System (8,5): The First Expansion}

Our first harmonic expansion occurs when we apply the linear multiplier $f(x) \equiv 3x \pmod{10}$. This generates the $(8,5)$ system, expanding the structural generator to $q=3$. While the underlying mathematical logic remains pristine, the physical intervals between the notes begin to stretch.

{\tiny \begin{table}[H]
    \centering
    \begin{minipage}{0.48\textwidth}
        \centering
        \begin{tabular}{l}
        \toprule
        \textbf{Numerical ($M_r$)} \\
        \midrule
        $M_0 = \{0, 8, 3\}$ \\
        $M_1 = \{1, 9, 4\}$ \\
        $M_2 = \{2, 0, 5\}$ \\
        $M_3 = \{3, 1, 6\}$ \\
        $M_4 = \{4, 2, 7\}$ \\
        $M_5 = \{5, 3, 8\}$ \\
        $M_6 = \{6, 4, 9\}$ \\
        $M_7 = \{7, 5, 0\}$ \\
        $M_8 = \{8, 6, 1\}$ \\
        $M_9 = \{9, 7, 2\}$ \\
        \bottomrule
        \end{tabular}
    \end{minipage}\hfill
    \begin{minipage}{0.48\textwidth}
        \centering
        \begin{tabular}{l}
        \toprule
        \textbf{Numerical ($m_r$)} \\
        \midrule
        $m_0 = \{0, 5, 3\}$ \\
        $m_1 = \{1, 6, 4\}$ \\
        $m_2 = \{2, 7, 5\}$ \\
        $m_3 = \{3, 8, 6\}$ \\
        $m_4 = \{4, 9, 7\}$ \\
        $m_5 = \{5, 0, 8\}$ \\
        $m_6 = \{6, 1, 9\}$ \\
        $m_7 = \{7, 2, 0\}$ \\
        $m_8 = \{8, 3, 1\}$ \\
        $m_9 = \{9, 4, 2\}$ \\
        \bottomrule
        \end{tabular}
    \end{minipage}
    \caption{Chords in the (8,5) system.}
\end{table}}

This systemic expansion naturally alters the progression pathways. We map the new permissible connections below:

{\tiny \begin{table}[H]
    \centering
    \begin{minipage}{0.48\textwidth}
        \centering
        \begin{tabular}{l}
        \toprule
        \textbf{Major $\to$ Minor} \\
        \midrule
        $M_0 \to m_0, m_8, m_5$ \\
        $M_1 \to m_1, m_9, m_6$ \\
        $M_2 \to m_2, m_0, m_7$ \\
        $M_3 \to m_3, m_1, m_8$ \\
        $M_4 \to m_4, m_2, m_9$ \\
        $M_5 \to m_5, m_3, m_0$ \\
        $M_6 \to m_6, m_4, m_1$ \\
        $M_7 \to m_7, m_5, m_2$ \\
        $M_8 \to m_8, m_6, m_3$ \\
        $M_9 \to m_9, m_7, m_4$ \\
        \bottomrule
        \end{tabular}
    \end{minipage}\hfill
    \begin{minipage}{0.48\textwidth}
        \centering
        \begin{tabular}{l}
        \toprule
        \textbf{Minor $\to$ Major} \\
        \midrule
        $m_0 \to M_0, M_2, M_5$ \\
        $m_1 \to M_1, M_3, M_6$ \\
        $m_2 \to M_2, M_4, M_7$ \\
        $m_3 \to M_3, M_5, M_8$ \\
        $m_4 \to M_4, M_6, M_9$ \\
        $m_5 \to M_5, M_7, M_0$ \\
        $m_6 \to M_6, M_8, M_1$ \\
        $m_7 \to M_7, M_9, M_2$ \\
        $m_8 \to M_8, M_0, M_3$ \\
        $m_9 \to M_9, M_1, M_4$ \\
        \bottomrule
        \end{tabular}
    \end{minipage}
    \caption{Tonnetz progression map for the (8,5) system.}
\end{table}}

The consequences of this mathematical expansion become visually apparent when plotted on our dual graphs. While the abstract topology (left) remains perfectly preserved under the isomorphism, the physical pitch-space mapping (right) reveals that executing these smooth logical progressions requires much wider, non-linear traversals across the instrument.

\begin{figure}[H]
    \centering
    \begin{minipage}{0.48\textwidth}
        \centering
        \begin{tikzpicture}[scale=0.8, transform shape]
            \tikzset{snode/.style={circle, draw=black, fill=white, inner sep=1.2pt}}
            \def\R{2.0} \def\Rtext{2.3} 
            
            \foreach \s in {0,...,9} {
                \pgfmathsetmacro{\k}{int(\s)}
                \pgfmathsetmacro{\j}{int(mod(\s + 6, 10))}
                \pgfmathsetmacro{\angM}{90 - (\s*2)*18}
                \pgfmathsetmacro{\angm}{90 - (\s*2 + 1)*18}
                \coordinate (M\k) at (\angM:\R);
                \coordinate (m\j) at (\angm:\R);
            }
            \foreach \i in {0,...,9} {
                \draw[red, thick] (M\i) -- (m\i);
                \pgfmathsetmacro{\idxL}{int(mod(\i+6, 10))}
                \draw[blue, thick] (M\i) -- (m\idxL);
                \pgfmathsetmacro{\idxR}{int(mod(\i+5, 10))}
                \draw[levigold, thick] (M\i) -- (m\idxR);
            }
            \foreach \s in {0,...,9} {
                \pgfmathsetmacro{\k}{int(\s)}
                \pgfmathsetmacro{\j}{int(mod(\s + 6, 10))}
                \pgfmathsetmacro{\angM}{90 - (\s*2)*18}
                \pgfmathsetmacro{\angm}{90 - (\s*2 + 1)*18}
          
                \node[snode] at (M\k) {};
                \node[snode] at (m\j) {};
                
                \pgfmathsetmacro{\labelK}{int(mod(3*\k, 10))}
                \pgfmathsetmacro{\labelJ}{int(mod(3*\j, 10))}
                
                \pgfmathsetmacro{\normM}{mod(\angM+360, 360)}
                \pgfmathsetmacro{\normm}{mod(\angm+360, 360)}
                \pgfmathsetmacro{\rotM}{ (\normM > 90.1 && \normM < 269.9) ? \angM - 180 : \angM }
                \pgfmathsetmacro{\rotm}{ (\normm > 90.1 && \normm < 269.9) ? \angm - 180 : \angm }
                
                \pgfmathsetmacro{\leftM}{ (\normM > 90.1 && \normM < 269.9) ? 1 : 0 }
                \ifnum\leftM=1
                    \node[font=\fontsize{8}{9}\selectfont, anchor=east, rotate=\rotM] at (\angM:\Rtext) {$M_{\labelK}$};
                \else
                    \node[font=\fontsize{8}{9}\selectfont, anchor=west, rotate=\rotM] at (\angM:\Rtext) {$M_{\labelK}$};
                \fi

                \pgfmathsetmacro{\leftm}{ (\normm > 90.1 && \normm < 269.9) ? 1 : 0 }
                \ifnum\leftm=1
                    \node[font=\fontsize{8}{9}\selectfont, anchor=east, rotate=\rotm] at (\angm:\Rtext) {$m_{\labelJ}$};
                \else
                    \node[font=\fontsize{8}{9}\selectfont, anchor=west, rotate=\rotm] at (\angm:\Rtext) {$m_{\labelJ}$};
                \fi
            }
        \end{tikzpicture}
        \caption*{Topological Layout \\ (Proof of Isomorphism)}
    \end{minipage}\hfill
    \begin{minipage}{0.48\textwidth}
        \centering
        \begin{tikzpicture}[scale=0.8, transform shape]
            \tikzset{snode/.style={circle, draw=black, fill=white, inner sep=1.2pt}}
            \def\R{2.0} \def\Rtext{2.3} 
            
            \foreach \s in {0,...,9} {
                \pgfmathsetmacro{\k}{int(\s)}
                \pgfmathsetmacro{\j}{int(mod(\s + 6, 10))}
                \pgfmathsetmacro{\angM}{90 - (\s*2)*18}
                \pgfmathsetmacro{\angm}{90 - (\s*2 + 1)*18}
                \coordinate (M\k) at (\angM:\R);
                \coordinate (m\j) at (\angm:\R);
            }
            \foreach \i in {0,...,9} {
                \draw[red, thick] (M\i) -- (m\i);
                \pgfmathsetmacro{\idxL}{int(mod(\i+8, 10))}
                \draw[blue, thick] (M\i) -- (m\idxL);
                \pgfmathsetmacro{\idxR}{int(mod(\i+5, 10))}
                \draw[levigold, thick] (M\i) -- (m\idxR);
            }
            \foreach \s in {0,...,9} {
                \pgfmathsetmacro{\k}{int(\s)}
                \pgfmathsetmacro{\j}{int(mod(\s + 6, 10))}
                \pgfmathsetmacro{\angM}{90 - (\s*2)*18}
                \pgfmathsetmacro{\angm}{90 - (\s*2 + 1)*18}
          
                \node[snode] at (M\k) {};
                \node[snode] at (m\j) {};
                
                \pgfmathsetmacro{\normM}{mod(\angM+360, 360)}
                \pgfmathsetmacro{\normm}{mod(\angm+360, 360)}
                \pgfmathsetmacro{\rotM}{ (\normM > 90.1 && \normM < 269.9) ? \angM - 180 : \angM }
                \pgfmathsetmacro{\rotm}{ (\normm > 90.1 && \normm < 269.9) ? \angm - 180 : \angm }
                
                \pgfmathsetmacro{\leftM}{ (\normM > 90.1 && \normM < 269.9) ? 1 : 0 }
                \ifnum\leftM=1
                    \node[font=\fontsize{8}{9}\selectfont, anchor=east, rotate=\rotM] at (\angM:\Rtext) {$M_{\k}$};
                \else
                    \node[font=\fontsize{8}{9}\selectfont, anchor=west, rotate=\rotM] at (\angM:\Rtext) {$M_{\k}$};
                \fi

                \pgfmathsetmacro{\leftm}{ (\normm > 90.1 && \normm < 269.9) ? 1 : 0 }
                \ifnum\leftm=1
                    \node[font=\fontsize{8}{9}\selectfont, anchor=east, rotate=\rotm] at (\angm:\Rtext) {$m_{\j}$};
                \else
                    \node[font=\fontsize{8}{9}\selectfont, anchor=west, rotate=\rotm] at (\angm:\Rtext) {$m_{\j}$};
                \fi
            }
        \end{tikzpicture}
        \caption*{Pitch-Space Layout \\ (Geometric Rewiring)}
    \end{minipage}
    \caption{Two views of the (8,5) system. The topological isomorphism (left) is guaranteed by $f(x) \equiv 3x \pmod{10}$, whilst the pitch-space mapping reveals the expanded physical traversal required by $t=8$ (blue paths).}
\end{figure}

\subsubsection{System (2,5): The Second Expansion}

Applying the multiplier $f(x) \equiv 7x \pmod{10}$ yields the $(2,5)$ system, with a generator of $q=7$. Functioning structurally as the counterpart to the $(8,5)$ universe, this system further scrambles the internal intervals of the triads. 

{\tiny \begin{table}[H]
    \centering
    \begin{minipage}{0.48\textwidth}
        \centering
        \begin{tabular}{l}
        \toprule
        \textbf{Numerical ($M_r$)} \\
        \midrule
        $M_0 = \{0, 2, 7\}$ \\
        $M_1 = \{1, 3, 8\}$ \\
        $M_2 = \{2, 4, 9\}$ \\
        $M_3 = \{3, 5, 0\}$ \\
        $M_4 = \{4, 6, 1\}$ \\
        $M_5 = \{5, 7, 2\}$ \\
        $M_6 = \{6, 8, 3\}$ \\
        $M_7 = \{7, 9, 4\}$ \\
        $M_8 = \{8, 0, 5\}$ \\
        $M_9 = \{9, 1, 6\}$ \\
        \bottomrule
        \end{tabular}
    \end{minipage}\hfill
    \begin{minipage}{0.48\textwidth}
        \centering
        \begin{tabular}{l}
        \toprule
        \textbf{Numerical ($m_r$)} \\
        \midrule
        $m_0 = \{0, 5, 7\}$ \\
        $m_1 = \{1, 6, 8\}$ \\
        $m_2 = \{2, 7, 9\}$ \\
        $m_3 = \{3, 8, 0\}$ \\
        $m_4 = \{4, 9, 1\}$ \\
        $m_5 = \{5, 0, 2\}$ \\
        $m_6 = \{6, 1, 3\}$ \\
        $m_7 = \{7, 2, 4\}$ \\
        $m_8 = \{8, 3, 5\}$ \\
        $m_9 = \{9, 4, 6\}$ \\
        \bottomrule
        \end{tabular}
    \end{minipage}
    \caption{Chords in the (2,5) system.}
\end{table}}

The resulting progression map demonstrates a unique geometric rewiring of the Tonnetz connections.

{\tiny \begin{table}[H]
    \centering
    \begin{minipage}{0.48\textwidth}
        \centering
        \begin{tabular}{l}
        \toprule
        \textbf{Major $\to$ Minor} \\
        \midrule
        $M_0 \to m_0, m_2, m_5$ \\
        $M_1 \to m_1, m_3, m_6$ \\
        $M_2 \to m_2, m_4, m_7$ \\
        $M_3 \to m_3, m_5, m_8$ \\
        $M_4 \to m_4, m_6, m_9$ \\
        $M_5 \to m_5, m_7, m_0$ \\
        $M_6 \to m_6, m_8, m_1$ \\
        $M_7 \to m_7, m_9, m_2$ \\
        $M_8 \to m_8, m_0, m_3$ \\
        $M_9 \to m_9, m_1, m_4$ \\
        \bottomrule
        \end{tabular}
    \end{minipage}\hfill
    \begin{minipage}{0.48\textwidth}
        \centering
        \begin{tabular}{l}
        \toprule
        \textbf{Minor $\to$ Major} \\
        \midrule
        $m_0 \to M_0, M_8, M_5$ \\
        $m_1 \to M_1, M_9, M_6$ \\
        $m_2 \to M_2, M_0, M_7$ \\
        $m_3 \to M_3, M_1, M_8$ \\
        $m_4 \to M_4, M_2, M_9$ \\
        $m_5 \to M_5, M_3, M_0$ \\
        $m_6 \to M_6, M_4, M_1$ \\
        $m_7 \to M_7, M_5, M_2$ \\
        $m_8 \to M_8, M_6, M_3$ \\
        $m_9 \to M_9, M_7, M_4$ \\
        \bottomrule
        \end{tabular}
    \end{minipage}
    \caption{Tonnetz progression map for the (2,5) system.}
\end{table}}

When visualizing this system, we observe a distinct concentration of pathways in the pitch-space layout. The network edges---particularly the $L$-transformations (blue)---cluster tightly around the center, a direct geometric consequence of the narrow $+2$ step size.

\begin{figure}[H]
    \centering
    \begin{minipage}{0.48\textwidth}
        \centering
        \begin{tikzpicture}[scale=0.8, transform shape]
            \tikzset{snode/.style={circle, draw=black, fill=white, inner sep=1.2pt}}
            \def\R{2.0} \def\Rtext{2.3} 
            
            \foreach \s in {0,...,9} {
                \pgfmathsetmacro{\k}{int(\s)}
                \pgfmathsetmacro{\j}{int(mod(\s + 6, 10))}
                \pgfmathsetmacro{\angM}{90 - (\s*2)*18}
                \pgfmathsetmacro{\angm}{90 - (\s*2 + 1)*18}
                \coordinate (M\k) at (\angM:\R);
                \coordinate (m\j) at (\angm:\R);
            }
            \foreach \i in {0,...,9} {
                \draw[red, thick] (M\i) -- (m\i);
                \pgfmathsetmacro{\idxL}{int(mod(\i+6, 10))}
                \draw[blue, thick] (M\i) -- (m\idxL);
                \pgfmathsetmacro{\idxR}{int(mod(\i+5, 10))}
                \draw[levigold, thick] (M\i) -- (m\idxR);
            }
            \foreach \s in {0,...,9} {
                \pgfmathsetmacro{\k}{int(\s)}
                \pgfmathsetmacro{\j}{int(mod(\s + 6, 10))}
                \pgfmathsetmacro{\angM}{90 - (\s*2)*18}
                \pgfmathsetmacro{\angm}{90 - (\s*2 + 1)*18}
          
                \node[snode] at (M\k) {};
                \node[snode] at (m\j) {};
                
                \pgfmathsetmacro{\labelK}{int(mod(7*\k, 10))}
                \pgfmathsetmacro{\labelJ}{int(mod(7*\j, 10))}
                
                \pgfmathsetmacro{\normM}{mod(\angM+360, 360)}
                \pgfmathsetmacro{\normm}{mod(\angm+360, 360)}
                \pgfmathsetmacro{\rotM}{ (\normM > 90.1 && \normM < 269.9) ? \angM - 180 : \angM }
                \pgfmathsetmacro{\rotm}{ (\normm > 90.1 && \normm < 269.9) ? \angm - 180 : \angm }
                
                \pgfmathsetmacro{\leftM}{ (\normM > 90.1 && \normM < 269.9) ? 1 : 0 }
                \ifnum\leftM=1
                    \node[font=\fontsize{8}{9}\selectfont, anchor=east, rotate=\rotM] at (\angM:\Rtext) {$M_{\labelK}$};
                \else
                    \node[font=\fontsize{8}{9}\selectfont, anchor=west, rotate=\rotM] at (\angM:\Rtext) {$M_{\labelK}$};
                \fi

                \pgfmathsetmacro{\leftm}{ (\normm > 90.1 && \normm < 269.9) ? 1 : 0 }
                \ifnum\leftm=1
                    \node[font=\fontsize{8}{9}\selectfont, anchor=east, rotate=\rotm] at (\angm:\Rtext) {$m_{\labelJ}$};
                \else
                    \node[font=\fontsize{8}{9}\selectfont, anchor=west, rotate=\rotm] at (\angm:\Rtext) {$m_{\labelJ}$};
                \fi
            }
        \end{tikzpicture}
        \caption*{Topological Layout \\ (Proof of Isomorphism)}
    \end{minipage}\hfill
    \begin{minipage}{0.48\textwidth}
        \centering
        \begin{tikzpicture}[scale=0.8, transform shape]
            \tikzset{snode/.style={circle, draw=black, fill=white, inner sep=1.2pt}}
            \def\R{2.0} \def\Rtext{2.3} 
            
            \foreach \s in {0,...,9} {
                \pgfmathsetmacro{\k}{int(\s)}
                \pgfmathsetmacro{\j}{int(mod(\s + 6, 10))}
                \pgfmathsetmacro{\angM}{90 - (\s*2)*18}
                \pgfmathsetmacro{\angm}{90 - (\s*2 + 1)*18}
                \coordinate (M\k) at (\angM:\R);
                \coordinate (m\j) at (\angm:\R);
            }
            \foreach \i in {0,...,9} {
                \draw[red, thick] (M\i) -- (m\i);
                \pgfmathsetmacro{\idxL}{int(mod(\i+2, 10))}
                \draw[blue, thick] (M\i) -- (m\idxL);
                \pgfmathsetmacro{\idxR}{int(mod(\i+5, 10))}
                \draw[levigold, thick] (M\i) -- (m\idxR);
            }
            \foreach \s in {0,...,9} {
                \pgfmathsetmacro{\k}{int(\s)}
                \pgfmathsetmacro{\j}{int(mod(\s + 6, 10))}
                \pgfmathsetmacro{\angM}{90 - (\s*2)*18}
                \pgfmathsetmacro{\angm}{90 - (\s*2 + 1)*18}
          
                \node[snode] at (M\k) {};
                \node[snode] at (m\j) {};
                
                \pgfmathsetmacro{\normM}{mod(\angM+360, 360)}
                \pgfmathsetmacro{\normm}{mod(\angm+360, 360)}
                \pgfmathsetmacro{\rotM}{ (\normM > 90.1 && \normM < 269.9) ? \angM - 180 : \angM }
                \pgfmathsetmacro{\rotm}{ (\normm > 90.1 && \normm < 269.9) ? \angm - 180 : \angm }
                
                \pgfmathsetmacro{\leftM}{ (\normM > 90.1 && \normM < 269.9) ? 1 : 0 }
                \ifnum\leftM=1
                    \node[font=\fontsize{8}{9}\selectfont, anchor=east, rotate=\rotM] at (\angM:\Rtext) {$M_{\k}$};
                \else
                    \node[font=\fontsize{8}{9}\selectfont, anchor=west, rotate=\rotM] at (\angM:\Rtext) {$M_{\k}$};
                \fi

                \pgfmathsetmacro{\leftm}{ (\normm > 90.1 && \normm < 269.9) ? 1 : 0 }
                \ifnum\leftm=1
                    \node[font=\fontsize{8}{9}\selectfont, anchor=east, rotate=\rotm] at (\angm:\Rtext) {$m_{\j}$};
                \else
                    \node[font=\fontsize{8}{9}\selectfont, anchor=west, rotate=\rotm] at (\angm:\Rtext) {$m_{\j}$};
                \fi
            }
        \end{tikzpicture}
        \caption*{Pitch-Space Layout \\ (Geometric Rewiring)}
    \end{minipage}
    \caption{Two views of the (2,5) system, mapped via $f(x) \equiv 7x \pmod{10}$. Notice how the Pitch-Space geometry tightly groups the blue $+2$ voice-leading pathways towards the center.}
\end{figure}

\subsubsection{System (4,5): The Final Mirror}

The fourth and final decaphonic universe is generated by the multiplier $f(x) \equiv 9x \pmod{10}$, yielding the $(4,5)$ system. Since $9 \equiv -1 \pmod{10}$, this transformation serves as the ultimate boundary of our 10-TET space. The generator is $q=9$.

{\tiny \begin{table}[H]
    \centering
    \begin{minipage}{0.48\textwidth}
        \centering
        \begin{tabular}{l}
        \toprule
        \textbf{Numerical ($M_r$)} \\
        \midrule
        $M_0 = \{0, 4, 9\}$ \\
        $M_1 = \{1, 5, 0\}$ \\
        $M_2 = \{2, 6, 1\}$ \\
        $M_3 = \{3, 7, 2\}$ \\
        $M_4 = \{4, 8, 3\}$ \\
        $M_5 = \{5, 9, 4\}$ \\
        $M_6 = \{6, 0, 5\}$ \\
        $M_7 = \{7, 1, 6\}$ \\
        $M_8 = \{8, 2, 7\}$ \\
        $M_9 = \{9, 3, 8\}$ \\
        \bottomrule
        \end{tabular}
    \end{minipage}\hfill
    \begin{minipage}{0.48\textwidth}
        \centering
        \begin{tabular}{l}
        \toprule
        \textbf{Numerical ($m_r$)} \\
        \midrule
        $m_0 = \{0, 5, 9\}$ \\
        $m_1 = \{1, 6, 0\}$ \\
        $m_2 = \{2, 7, 1\}$ \\
        $m_3 = \{3, 8, 2\}$ \\
        $m_4 = \{4, 9, 3\}$ \\
        $m_5 = \{5, 0, 4\}$ \\
        $m_6 = \{6, 1, 5\}$ \\
        $m_7 = \{7, 2, 6\}$ \\
        $m_8 = \{8, 3, 7\}$ \\
        $m_9 = \{9, 4, 8\}$ \\
        \bottomrule
        \end{tabular}
    \end{minipage}
    \caption{Chords in the (4,5) system.}
\end{table}}

Completing our decaphonic catalog, we trace the final set of permissible Tonnetz connections. 

{\tiny \begin{table}[H]
    \centering
    \begin{minipage}{0.48\textwidth}
        \centering
        \begin{tabular}{l}
        \toprule
        \textbf{Major $\to$ Minor} \\
        \midrule
        $M_0 \to m_0, m_4, m_5$ \\
        $M_1 \to m_1, m_5, m_6$ \\
        $M_2 \to m_2, m_6, m_7$ \\
        $M_3 \to m_3, m_7, m_8$ \\
        $M_4 \to m_4, m_8, m_9$ \\
        $M_5 \to m_5, m_9, m_0$ \\
        $M_6 \to m_6, m_0, m_1$ \\
        $M_7 \to m_7, m_1, m_2$ \\
        $M_8 \to m_8, m_2, m_3$ \\
        $M_9 \to m_9, m_3, m_4$ \\
        \bottomrule
        \end{tabular}
    \end{minipage}\hfill
    \begin{minipage}{0.48\textwidth}
        \centering
        \begin{tabular}{l}
        \toprule
        \textbf{Minor $\to$ Major} \\
        \midrule
        $m_0 \to M_0, M_6, M_5$ \\
        $m_1 \to M_1, M_7, M_6$ \\
        $m_2 \to M_2, M_8, M_7$ \\
        $m_3 \to M_3, M_9, M_8$ \\
        $m_4 \to M_4, M_0, M_9$ \\
        $m_5 \to M_5, M_1, M_0$ \\
        $m_6 \to M_6, M_2, M_1$ \\
        $m_7 \to M_7, M_3, M_2$ \\
        $m_8 \to M_8, M_4, M_3$ \\
        $m_9 \to M_9, M_5, M_4$ \\
        \bottomrule
        \end{tabular}
    \end{minipage}
    \caption{Tonnetz progression map for the (4,5) system.}
\end{table}}

\begin{figure}[H]
    \centering
    \begin{minipage}{0.48\textwidth}
        \centering
        \begin{tikzpicture}[scale=0.6, transform shape]
            \tikzset{snode/.style={circle, draw=black, fill=white, inner sep=1.2pt}}
            \def\R{2.0} \def\Rtext{2.3} 
            
            \foreach \s in {0,...,9} {
                \pgfmathsetmacro{\k}{int(\s)}
                \pgfmathsetmacro{\j}{int(mod(\s + 6, 10))}
                \pgfmathsetmacro{\angM}{90 - (\s*2)*18}
                \pgfmathsetmacro{\angm}{90 - (\s*2 + 1)*18}
                \coordinate (M\k) at (\angM:\R);
                \coordinate (m\j) at (\angm:\R);
            }
            \foreach \i in {0,...,9} {
                \draw[red, thick] (M\i) -- (m\i);
                \pgfmathsetmacro{\idxL}{int(mod(\i+6, 10))}
                \draw[blue, thick] (M\i) -- (m\idxL);
                \pgfmathsetmacro{\idxR}{int(mod(\i+5, 10))}
                \draw[levigold, thick] (M\i) -- (m\idxR);
            }
            \foreach \s in {0,...,9} {
                \pgfmathsetmacro{\k}{int(\s)}
                \pgfmathsetmacro{\j}{int(mod(\s + 6, 10))}
                \pgfmathsetmacro{\angM}{90 - (\s*2)*18}
                \pgfmathsetmacro{\angm}{90 - (\s*2 + 1)*18}
          
                \node[snode] at (M\k) {};
                \node[snode] at (m\j) {};
                
                \pgfmathsetmacro{\labelK}{int(mod(9*\k, 10))}
                \pgfmathsetmacro{\labelJ}{int(mod(9*\j, 10))}
                
                \pgfmathsetmacro{\normM}{mod(\angM+360, 360)}
                \pgfmathsetmacro{\normm}{mod(\angm+360, 360)}
                \pgfmathsetmacro{\rotM}{ (\normM > 90.1 && \normM < 269.9) ? \angM - 180 : \angM }
                \pgfmathsetmacro{\rotm}{ (\normm > 90.1 && \normm < 269.9) ? \angm - 180 : \angm }
                
                \pgfmathsetmacro{\leftM}{ (\normM > 90.1 && \normM < 269.9) ? 1 : 0 }
                \ifnum\leftM=1
                    \node[font=\fontsize{8}{9}\selectfont, anchor=east, rotate=\rotM] at (\angM:\Rtext) {$M_{\labelK}$};
                \else
                    \node[font=\fontsize{8}{9}\selectfont, anchor=west, rotate=\rotM] at (\angM:\Rtext) {$M_{\labelK}$};
                \fi

                \pgfmathsetmacro{\leftm}{ (\normm > 90.1 && \normm < 269.9) ? 1 : 0 }
                \ifnum\leftm=1
                    \node[font=\fontsize{8}{9}\selectfont, anchor=east, rotate=\rotm] at (\angm:\Rtext) {$m_{\labelJ}$};
                \else
                    \node[font=\fontsize{8}{9}\selectfont, anchor=west, rotate=\rotm] at (\angm:\Rtext) {$m_{\labelJ}$};
                \fi
            }
        \end{tikzpicture}
        \caption*{Topological Layout \\ (Proof of Isomorphism)}
    \end{minipage}\hfill
    \begin{minipage}{0.48\textwidth}
        \centering
        \begin{tikzpicture}[scale=0.6, transform shape]
            \tikzset{snode/.style={circle, draw=black, fill=white, inner sep=1.2pt}}
            \def\R{2.0} \def\Rtext{2.3} 
            
            \foreach \s in {0,...,9} {
                \pgfmathsetmacro{\k}{int(\s)}
                \pgfmathsetmacro{\j}{int(mod(\s + 6, 10))}
                \pgfmathsetmacro{\angM}{90 - (\s*2)*18}
                \pgfmathsetmacro{\angm}{90 - (\s*2 + 1)*18}
                \coordinate (M\k) at (\angM:\R);
                \coordinate (m\j) at (\angm:\R);
            }
            \foreach \i in {0,...,9} {
                \draw[red, thick] (M\i) -- (m\i);
                \pgfmathsetmacro{\idxL}{int(mod(\i+4, 10))}
                \draw[blue, thick] (M\i) -- (m\idxL);
                \pgfmathsetmacro{\idxR}{int(mod(\i+5, 10))}
                \draw[levigold, thick] (M\i) -- (m\idxR);
            }
            \foreach \s in {0,...,9} {
                \pgfmathsetmacro{\k}{int(\s)}
                \pgfmathsetmacro{\j}{int(mod(\s + 6, 10))}
                \pgfmathsetmacro{\angM}{90 - (\s*2)*18}
                \pgfmathsetmacro{\angm}{90 - (\s*2 + 1)*18}
          
                \node[snode] at (M\k) {};
                \node[snode] at (m\j) {};
                
                \pgfmathsetmacro{\normM}{mod(\angM+360, 360)}
                \pgfmathsetmacro{\normm}{mod(\angm+360, 360)}
                \pgfmathsetmacro{\rotM}{ (\normM > 90.1 && \normM < 269.9) ? \angM - 180 : \angM }
                \pgfmathsetmacro{\rotm}{ (\normm > 90.1 && \normm < 269.9) ? \angm - 180 : \angm }
                
                \pgfmathsetmacro{\leftM}{ (\normM > 90.1 && \normM < 269.9) ? 1 : 0 }
                \ifnum\leftM=1
                    \node[font=\fontsize{8}{9}\selectfont, anchor=east, rotate=\rotM] at (\angM:\Rtext) {$M_{\k}$};
                \else
                    \node[font=\fontsize{8}{9}\selectfont, anchor=west, rotate=\rotM] at (\angM:\Rtext) {$M_{\k}$};
                \fi

                \pgfmathsetmacro{\leftm}{ (\normm > 90.1 && \normm < 269.9) ? 1 : 0 }
                \ifnum\leftm=1
                    \node[font=\fontsize{8}{9}\selectfont, anchor=east, rotate=\rotm] at (\angm:\Rtext) {$m_{\j}$};
                \else
                    \node[font=\fontsize{8}{9}\selectfont, anchor=west, rotate=\rotm] at (\angm:\Rtext) {$m_{\j}$};
                \fi
            }
        \end{tikzpicture}
        \caption*{Pitch-Space Layout \\ (Geometric Rewiring)}
    \end{minipage}
    \caption{Two views of the (4,5) system. Driven by $f(x) \equiv 9x \pmod{10}$, this universe demonstrates the final orientation-preserving permutation of the decaphonic geometry.}
\end{figure}

\begin{conclusion}[The Musician's Playbook: Decaphonic Improvisation]
The 10-TET universe offers a profound pedagogical advantage over the traditional 12-TET landscape. As proven mathematically in Theorem 3, there is absolutely no ``Negative Harmony'' in this space. Every single isomorphic transformation is strictly orientation-preserving, meaning a Major chord inevitably remains a Major chord, and the inherent syntax of voice-leading paths never forces you to invert your physical reflexes.

For the performing musician, this simplifies decaphonic navigation immensely. You must only train your muscle memory to master the physical hand shapes (the ``grips") and the straightforward geometric steps of the foundational $(6,5)$ system. Once those are internalized, exploring the alien terrains of $(8,5)$, $(2,5)$, or $(4,5)$ requires no new conceptual logic. To transpose a progression into one of these expanded universes, you simply multiply your root notes by $3$, $7$, or $9$ respectively. The abstract musical journey remains exactly the same---only your hands must perform wider physical leaps across the keyboard to execute the rewired geometry.
\end{conclusion}

\subsubsection{Acoustic Realities: Auditory Proofs of the Decaphonic Isomorphisms}

To verify that the microtonal landscapes of 10-TET constitute genuinely viable compositional environments rather than mere algebraic abstractions, we designed a short, 16-chord cyclical progression titled \textit{Decaphonic Miniature}. Because the fundamental interval unit in a decaphonic tuning system equals exactly $120$ cents, executing chords in close, block-like positions can cause severe acoustic masking and harsh sensory roughness due to the narrowness of the steps. To resolve this and fully exploit the physical modeling synthesis of the piano strings, the composition is arranged using a wide, spatial open-voicing layout. The constituent triads are dispersed across three distinct octaves---anchored by a deep fundamental bass---while explicit sustain pedal messages are engaged to trigger the instrument's sympathetic resonance, allowing the microtonal frequencies to form a crystalline sonic cloud.

Rather than standing still, the progression executes an exhaustive periodic cycle that wends across the bipartite Levi graph of the foundational $(6,5)$ anchor space. The 16-chord trajectory is explicitly mapped as follows:
{\tiny \begin{equation}
 M_0 \to m_0 \to M_4 \to m_9 \to M_9 \to m_5 \to M_0 \to m_6 \to M_1 \to m_7 \to M_2 \to m_2 \to M_6 \to m_1 \to M_5 \to m_0 \to M_0
\end{equation}}

This closed cycle tests all allowed transformation vectors within the non-degenerate pitch space, moving through structural parallel, left, and right voice-leading connections before seamlessly returning to the origin. To observe how this continuous progression maps onto our geometric coordinate systems, the path is tracked and highlighted explicitly across the four isomorphic decaphonic unrollings in Figure \ref{fig:sonata_10tet_paths}.

\newcommand{\drawSonataGraph}[1]{
    \begin{tikzpicture}[scale=0.6, transform shape]
        \tikzset{
            basenode/.style={circle, draw=black, fill=white, inner sep=1.2pt},
            startnode/.style={circle, draw=black, fill=darkgreen, inner sep=1.5pt, line width=0.7pt},
            midarrow/.style={decoration={markings, mark=at position 0.75 with {\arrow[orange]{Stealth[length=2.5mm, width=0.8mm]}}}, postaction={decorate}}
        }
        \def\R{2.0} \def\Rtext{2.3} 
        
        \foreach \s in {0,...,9} {
            \pgfmathsetmacro{\k}{int(\s)}
            \pgfmathsetmacro{\j}{int(mod(\s + 6, 10))}
            \pgfmathsetmacro{\angM}{90 - (\s*2)*18}
            \pgfmathsetmacro{\angm}{90 - (\s*2 + 1)*18}
            \coordinate (M\k) at (\angM:\R);
            \coordinate (m\j) at (\angm:\R);
        }
        
        \foreach \i in {0,...,9} {
            \draw[red, thin, opacity=0.15] (M\i) -- (m\i);
            \pgfmathsetmacro{\idxL}{int(mod(\i+6, 10))}
            \draw[blue, thin, opacity=0.15] (M\i) -- (m\idxL);
            \pgfmathsetmacro{\idxR}{int(mod(\i+5, 10))}
            \draw[levigold, thin, opacity=0.15] (M\i) -- (m\idxR);
        }
        
        \foreach \u/\v in {M0/m0, m0/M4, M4/m9, m9/M9, M9/m5, m5/M0, M0/m6, m6/M1, M1/m7, m7/M2, M2/m2, m2/M6, M6/m1, m1/M5, M5/m0, m0/M0} {
            \path[draw=orange, line width=1.2pt, midarrow] (\u) -- (\v);
        }
        
        \foreach \s in {0,...,9} {
            \pgfmathsetmacro{\k}{int(\s)}
            \pgfmathsetmacro{\j}{int(mod(\s + 6, 10))}
            \node[basenode] at (M\k) {};
            \node[basenode] at (m\j) {};
        }
        \node[startnode] at (M0) {};
        
        \foreach \s in {0,...,9} {
            \pgfmathsetmacro{\k}{int(\s)}
            \pgfmathsetmacro{\j}{int(mod(\s + 6, 10))}
            \pgfmathsetmacro{\angM}{90 - (\s*2)*18}
            \pgfmathsetmacro{\angm}{90 - (\s*2 + 1)*18}
            
            \pgfmathsetmacro{\labelK}{int(mod(#1*\k, 10))}
            \pgfmathsetmacro{\labelJ}{int(mod(#1*\j, 10))}
            
            \pgfmathsetmacro{\normM}{mod(\angM+360, 360)}
            \pgfmathsetmacro{\normm}{mod(\angm+360, 360)}
            \pgfmathsetmacro{\rotM}{ (\normM > 90.1 && \normM < 269.9) ? \angM - 180 : \angM }
            \pgfmathsetmacro{\rotm}{ (\normm > 90.1 && \normm < 269.9) ? \angm - 180 : \angm }
            
            \pgfmathsetmacro{\leftM}{ (\normM > 90.1 && \normM < 269.9) ? 1 : 0 }
            \ifnum\leftM=1
                \node[font=\fontsize{7}{8}\selectfont, anchor=east, rotate=\rotM] at (\angM:\Rtext) {$M_{\labelK}$};
            \else
                \node[font=\fontsize{7}{8}\selectfont, anchor=west, rotate=\rotM] at (\angM:\Rtext) {$M_{\labelK}$};
            \fi

            \pgfmathsetmacro{\leftm}{ (\normm > 90.1 && \normm < 269.9) ? 1 : 0 }
            \ifnum\leftm=1
                \node[font=\fontsize{7}{8}\selectfont, anchor=east, rotate=\rotm] at (\angm:\Rtext) {$m_{\labelJ}$};
            \else
                \node[font=\fontsize{7}{8}\selectfont, anchor=west, rotate=\rotm] at (\angm:\Rtext) {$m_{\labelJ}$};
            \fi
        }
    \end{tikzpicture}
}

\begin{figure}[H]
    \centering
    \begin{subfigure}{0.48\textwidth}
        \centering
        \drawSonataGraph{1}
        \caption{System (6,5)}
    \end{subfigure}
    \hfill
    \begin{subfigure}{0.48\textwidth}
        \centering
        \drawSonataGraph{3}
        \caption{System (8,5)}
    \end{subfigure}
    
    \vspace{0.5cm}
    
    \begin{subfigure}{0.48\textwidth}
        \centering
        \drawSonataGraph{7}
        \caption{System (2,5)}
    \end{subfigure}
    \hfill
    \begin{subfigure}{0.48\textwidth}
        \centering
        \drawSonataGraph{9}
        \caption{System (4,5)}
    \end{subfigure}
    \caption{\small The 16-chord Decaphonic Miniature mapped identically across the four non-degenerate 10-TET Tonnetz networks in their purely isomorphic topology. The orange line follows the invariant structural path of the composition. Elegant arrows dictate the flow of the progression. The anchor triad ($M_0$ in foundational coordinates) is highlighted in green, initiating and concluding the closed cycle. Although the physical pitch classes predictably permute by $a \in \{1, 3, 7, 9\}$, this visualization rigorously proves the absolute structural identity of the composition across all parallel acoustic dimensions.}
    \label{fig:sonata_10tet_paths}
\end{figure}

The auditory results of this multi-dimensional composition offer a striking acoustic verification of our architectural framework. While the physical interval shapes drastically dilate and contract across the expansions due to the scalar multipliers, the foundational path logic and voice-leading syntax remain identically preserved. 

To provide a comprehensive listening experience, we have compiled these parallel acoustic dimensions into a single, continuous audio demonstration. The recording executes the exact 16-chord trajectory sequentially across the four isomorphic universes, beginning with the foundational baseline (6,5), followed by the first expansion (8,5), the second expansion (2,5), and concluding with the final mirror (4,5). Brief announcements separate each topological dimension. We invite the reader to explore this auditory proof via the link below:

\vspace{0.3cm}
\begin{itemize}
    \item \textbf{Decaphonic Miniature: Sequential Demonstration}\\
    \href{https://www.fuw.edu.pl/~nurowski/Decaphonic_Jingle_all_systems.wav}{Listen to all four systems (.wav)}
\end{itemize}

\end{document}